\documentclass[a4paper,12pt]{article}

\usepackage{color,makeidx,bm,latexsym,amscd,stmaryrd,psfrag,amsmath,amssymb,enumerate,amsthm,float,graphicx,geometry}
\usepackage{caption}
\usepackage{subcaption}
\usepackage[all]{xy}

\geometry{margin=1.8cm}



\usepackage{tikz,epic,eso-pic}



\theoremstyle{plain}
\newtheorem{theorem}{Theorem}[section]
\newtheorem{corollary}[theorem]{Corollary}
\newtheorem{lemma}[theorem]{Lemma}

\newtheorem{problem}[theorem]{Problem}

\theoremstyle{definition}
\newtheorem{definition}[theorem]{Definition}
\newtheorem{remark}[theorem]{Remark}
\newtheorem{example}[theorem]{Example}
\newtheorem{proposition}[theorem]{Proposition}

\newcommand{\N}{\mathbb{N}}
\newcommand{\Q}{\mathbb{Q}}
\newcommand{\R}{\mathbb{R}}

\newcommand{\Z}{\mathbb{Z}}
\newcommand{\zono}{\mathcal{Z}}
\newcommand{\vol}{\operatorname{vol}}
\newcommand{\relvol}{\operatorname{relvol}}
\newcommand{\aff}{\operatorname{aff}}
\newcommand{\den}{\operatorname{den}}
\newcommand{\conv}{\operatorname{conv}}
\newcommand{\lcm}{\operatorname{lcm}}
\newcommand{\GCD}{\operatorname{GCD}}
\newcommand{\LCM}{\operatorname{LCM}}
\newcommand{\Ker}{\operatorname{Ker}}

\newcommand{\op}{\operatorname{op}}
\newcommand{\interior}{\operatorname{int}}
\newcommand{\rad}{\operatorname{rad}}


\title{Ehrhart quasi-polynomials of almost integral polytopes} 
\author{Christopher de Vries\thanks{Department of Mathematics, University of Bremen, Bibliothekstra\ss e 1, 28359 Bremen, GERMANY/Hokkaido University
E-mail: cvries@uni-bremen.de }, 
Masahiko Yoshinaga\thanks{
Department of Mathematics, 
Graduate School of Science, 
Osaka University,
Toyonaka, Osaka 560-0043, JAPAN 
E-mail: yoshinaga@math.sci.osaka-u.ac.jp}}
\date{\today}
\pagestyle{plain}

%
%

\begin{document}
\maketitle

\begin{abstract} 
A lattice polytope translated by a rational vector is called 
an almost integral polytope. 
In this paper, we study Ehrhart quasi-polynomials of 
almost integral polytopes. 
We study the connection between the shape of polytopes 
and the algebraic properties of the Ehrhart quasi-polynomials. 
In particular, we prove that lattice zonotopes and centrally symmetric 
lattice polytopes are characterized by Ehrhart quasi-polynomials of 
their rational translations. 
\end{abstract}


\section{Introduction}
\label{sec:intro}

A \emph{polytope} $P$ is the convex hull of finitely many points 
in $\R^d$. 
A polytope $P$ is called a lattice polytope (resp. rational polytope) if 
all its vertices are contained in $\Z^d$ (resp. $\Q^d$). 
The set of lattice points in a rational polytope $P$ is a 
fundamental object of study in 
enumerative combinatorics (\cite{bec-rob, st-ec1}). 
Notably, it is known that the function 
$\Z_{>0}\ni t\longmapsto L_P(t):=\#(t P\cap\Z^d)$ is 
a quasi-polynomial \cite{ehrhart}. 
In other words, there exists a positive 
integer $\rho>0$ and polynomials 
$f_1(x), f_2(x), \dots, f_\rho(x)\in\Z[x]$ such that 
\begin{equation}
L_P(t)=
\begin{cases}
f_1(t),  & \mbox{ if }t\equiv 1\mod\rho,\\
f_2(t),  & \mbox{ if }t\equiv 2\mod\rho,\\
 & \vdots\\
f_\rho(t),  & \mbox{ if }t\equiv \rho\mod\rho. 
\end{cases}
\end{equation}
Here, 
$\rho$ is referred to as the \emph{period}, and 
$f_1(x), \dots, f_\rho(x)$ are called the \emph{constituents}. 
(We identify the $\rho$-th constituent $f_\rho(x)$ with 
the $0$-th one $f_0(x)$.) 
$L_P(t)$ is called the \emph{Ehrhart quasi-polynomial} of $P$.

It is obvious that a multiple of a period of $L_P(t)$ is again a period 
of $L_P(t)$. We define 
the \emph{minimal period} as the smallest possible period, 
and therefore, every period is a multiple of this minimal period. 
It is a well-known fact that the minimal 
period divides the least common multiple ($\LCM$) of the 
denominators of the coordinates of the vertices of $P$. 
For the sake of simplicity, we will primarily focus on 
the minimal period. 
However, it is important to note that 
this restriction is not essential for our purposes 
(See Proposition \ref{prop:indgcd} and Remark \ref{rem:indsym}.)

Generally, the constituents of $L_P(t)$ are usually distinct from 
each other. 
However, in numerous examples, 
some of the constituents may be found to be 
identical. As a result, the number of distinct constituents becomes 
strictly less than 
the minimal period $\rho$. Typical examples are as follows.

\begin{example}
\label{ex:01}
Consider the following polytopes: 
\begin{itemize}
\item
$P_1=\frac{1}{9}\cdot [0, 1]^3$, which is a 
$3$-cube with side length $\frac{1}{9}$, 
\item 
$P_2=(\frac{5}{9}, \frac{5}{9}, \frac{2}{3})^t+
\operatorname{Conv}\{\pm e_i\mid i=1, 2, 3\}$, 
an octahedron translated by a rational vector 
(where $e_1, e_2, e_3$ are the standard basis vectors of $\R^3$), 
\item 
$P_3=(\frac{1}{9}, \frac{2}{9}, \frac{1}{3})^t+[0, 1]^3$, 
a unit cube translated by a rational vector. 
\end{itemize}
The Ehrhart quasi-polynomials 
$L_{P_1}(t)$, $L_{P_2}(t)$, and $L_{P_3}(t)$ 
all share the same minimal period $\rho=9$. 
The constituents of 
$L_{P_1}(t)$ are $f_k(t)=\left(\frac{t+9-k}{9}\right)^3$ for 
$k=0, 1, \dots, 8$, they are mutually distinct. 
In contrast, the constituents of 
$L_{P_2}(t)$ and $L_{P_3}(t)$ are as follows. 
\[
\begin{split}
L_{P_2}(t)&=
\begin{cases}
\frac{4}{3}t^3-\frac{4}{3}t,  & (t\equiv 1, 8\mod 9),\\
\frac{4}{3}t^3+\frac{2}{3}t,  & (t\equiv 2, 7\mod 9),\\
\frac{4}{3}t^3+t^2+\frac{2}{3}t,  & (t\equiv 3, 6\mod 9),\\
\frac{4}{3}t^3-\frac{1}{3}t,  & (t\equiv 4, 5\mod 9),\\
\frac{4}{3}t^3+2t^2+\frac{8}{3}t+1,  & (t\equiv 9\mod 9),
\end{cases}
\\
&
\\
L_{P_3}(t)&=
\begin{cases}
t^3  & (t\equiv 1, 2, 4, 5, 7, 8\mod 9),\\
t^3+t^2  & (t\equiv 3, 6\mod 9),\\
(t+1)^3  & (t\equiv 9\mod 9). 
\end{cases}
\end{split}
\]
Note that in $L_{P_2}(t)$ and $L_{P_3}(t)$ some of constituents coincide. 
\end{example}

The motivation for our paper is to investigate 
the connection between coincidences of constituents and 
the shape of the polytope $P$. 
To formalize these ``coincidences among constituents'', 
we introduce the following notion. 

\begin{definition}
Let $L(t)$ be a quasi-polynomial with period $\rho$ and 
constituents $f_1, \dots, f_{\rho-1}, f_\rho (=f_0)$. 
\begin{itemize}
\item[(1)] 
We say that $L(t)$ is \emph{symmetric} if $f_k=f_{\rho-k}$ for $0\leq k\leq\rho$. 
\item[(2)] 
We say that 
$L(t)$ has the \emph{$\GCD$-property} if 
$f_k=f_\ell$ whenever $\GCD(\rho, k)=\GCD(\rho, \ell)$. 
\end{itemize}
\end{definition}
Clearly, if a quasi-polynomial satisfies the $\GCD$-property, 
then it is symmetric. 

\begin{remark}
Quasi-polynomials with the $\GCD$-property naturally appear 
in the theory of hyperplane arrangements. 
See \S \ref{sec:hyp} further details. 
\end{remark}

In Example \ref{ex:01}, 
$L_{P_2}(t)$ is symmetric and 
$L_{P_3}(t)$ satisfies the $\GCD$-property. 
Surprisingly, these properties of Ehrhart quasi-polynomials 
are closely related to the fact that 
``$P_2$ is centrally symmetric'' and 
``$P_3$ is a zonotope'', respectively. 
In fact, the primary objective of this paper is to establish 
the correspondence between two columns of the following table. 
\begin{equation}
\begin{array}{c|c}
\mbox{Shape of $P$}&\mbox{Property of $L_P(t)$}\\
\hline
\mbox{General polytope}&\mbox{General quasi-polynomial}\\
\cup & \cup\\
\mbox{Centrally symmetric}&\mbox{Symmetric}\\
\cup & \cup\\
\mbox{Zonotope}&\mbox{$\GCD$-property}
\end{array}
\end{equation}
The main results (Theorem \ref{charsym} and Theorem \ref{charzono}) 
establish that for \emph{almost 
integral polytopes} (rationally translated lattice polytopes), 
the properties of the left column imply those of the right. 
Furthermore, 
we can also characterize the left column using the properties of 
the right column.

The paper is organized as follows: 

In \S \ref{sec:eqp}, after introducing basic notions, we recall 
a formula by Ardila-Beck-McWhirter, 
which expresses the Ehrhart quasi-polynomial of 
an almost integral zonotope. 
Applying this formula, we demonstrate that the 
Ehrhart quasi-polynomial satisfies the $\GCD$-property. 

In \S \ref{sec:tlpe}, we introduce the concept of the 
\emph{translated lattice point enumerator}, denoted as 
\begin{equation}
L_{(P, c)}(t):=\#((c+t P)\cap\Z^d), 
\end{equation}
where $P$ is a lattice polytope in $\R^d$ and $c\in\R^d$. 
We prove that $L_{(P, c)}(t)$ is a polynomial in $t\in\Z_{>0}$ 
(Theorem \ref{tlpe}). 
Furthermore, we show that the constituents of an almost integral 
polytope can be described in terms of $L_{(P, c)}(t)$ 
(Corollary \ref{cor:consti}). This result readily implies 
that if $P$ is a centrally symmetric almost integral polytope, 
then $L_P(t)$ is symmetric (Corollary \ref{sym1}). 

In \S \ref{sec:symm}, we present the first main result. 
Specifically, we prove that a lattice polytope 
$P$ is centrally symmetric if and only if 
the Ehrhart quasi-polynomial $L_{c+P}(t)$ is symmetric for 
every rational vector $c$ (Theorem \ref{charsym}). 
The ``only if'' part has already been established in 
\S \ref{sec:tlpe}. 
To complete the proof, we show that if $P$ is not 
centrally symmetric, then there exists a rational vector $c$ such that 
$L_{(P, c)}(t)\neq L_{(P, -c)}(t)$. For this, we employ 
Minkowski's result, which characterizes a polytope based on 
normal vectors and volumes of facets. 

In \S \ref{sec:zono}, we present the second main result. 
Specifically, we prove that a lattice polytope $P$ is a 
zonotope if and only if the Ehrhart quasi-polynomial 
$L_{c+P}(t)$ satisfies the $\GCD$-property for every rational 
vector $c$ (Theorem \ref{charzono}). 
The ``only if'' part has already been established in 
\S \ref{sec:eqp}. To complete the proof, we employ an 
involved argument using McMullen's characterization 
of zonotopes in terms of central symmetricity of faces. 
We establish that if $P$ is not a zonotope, 
then there exists a rational vector $c$ with odd denominators 
such that $L_{(P, c)}(t)\neq L_{(P, 2c)}(t)$, which 
implies $L_{c+P}(t)$ does not satisfy the $\GCD$-property. 

In \S \ref{discuss}, we will examine related problems. 
First, we discuss the minimal periods of almost integral 
polytopes and explore the relationship between the 
Ehrhart quasi-polynomials of 
almost integral zonotopes and the characteristic quasi-polynomials of 
hyperplane arrangements. We pose several related questions. 

We conclude this section with an elementary proposition 
concerning the periods of quasi-polynomials. 
This proposition justifies our convention that 
we may always assume a period is minimal. 

\begin{proposition}
\label{prop:indgcd}
Let $Q$ be a quasi-polynomial with the minimal period $\rho_0$ and 
the $i$-th constituent $f_i$. Let $k\in\Z_{>0}$. Then $Q$ has the 
$\GCD$-property for $\rho_0$ if and only if $Q$ has the 
$\GCD$-property for $k\rho_0$. 
\end{proposition}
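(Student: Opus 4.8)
The plan is to prove both implications by carefully tracking how the constituents of $Q$ relabel when we pass from period $\rho_0$ to period $k\rho_0$. Write $Q$ with period $\rho_0$ and constituents $f_0, f_1, \dots, f_{\rho_0-1}$, so that $Q(t)=f_{t\bmod\rho_0}(t)$. When we regard $Q$ as a quasi-polynomial with period $k\rho_0$, its $j$-th constituent (for $0\le j<k\rho_0$) is simply $g_j:=f_{j\bmod\rho_0}$. The entire argument is then a bookkeeping exercise comparing the condition ``$f_a=f_b$ whenever $\GCD(\rho_0,a)=\GCD(\rho_0,b)$'' with the condition ``$g_i=g_j$ whenever $\GCD(k\rho_0,i)=\GCD(k\rho_0,j)$.''

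First I would prove the easy direction: assume $Q$ has the $\GCD$-property for $k\rho_0$; I claim it has the $\GCD$-property for $\rho_0$. Suppose $\GCD(\rho_0,a)=\GCD(\rho_0,b)$ with $0\le a,b<\rho_0$. The goal is to produce $i\equiv a$, $j\equiv b\pmod{\rho_0}$ with $0\le i,j<k\rho_0$ and $\GCD(k\rho_0,i)=\GCD(k\rho_0,j)$; then $f_a=g_i=g_j=f_b$ follows from the $k\rho_0$-$\GCD$-property. The natural choice is $i=a$, $j=b$ themselves, but $\GCD(k\rho_0,a)$ need not equal $\GCD(k\rho_0,b)$ because the factor $k$ can interact differently with $a$ and $b$. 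Instead I would use the Chinese-Remainder/Dirichlet freedom in choosing representatives: among $a, a+\rho_0, a+2\rho_0, \dots, a+(k-1)\rho_0$ I want one whose $\GCD$ with $k\rho_0$ is controlled. Concretely, writing $d=\GCD(\rho_0,a)=\GCD(\rho_0,b)$, I would choose representatives $i,j$ that are both coprime to $k\rho_0/d$ in the appropriate sense — for instance, pick $i\equiv a\pmod{\rho_0}$ with $\GCD(i,k\rho_0)=d$ and likewise for $j$; such a representative exists by Dirichlet's theorem on primes in arithmetic progressions (or more elementarily, since $\GCD(a/d,\rho_0/d)=1$ one can adjust by a multiple of $\rho_0$ to also kill any common factor with $k$). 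Then $\GCD(k\rho_0,i)=d=\GCD(k\rho_0,j)$ and we are done.

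For the converse, assume $Q$ has the $\GCD$-property for $\rho_0$; I want it for $k\rho_0$. Take $0\le i,j<k\rho_0$ with $\GCD(k\rho_0,i)=\GCD(k\rho_0,j)=:e$. I need $g_i=g_j$, i.e. $f_{i\bmod\rho_0}=f_{j\bmod\rho_0}$, and by the $\rho_0$-$\GCD$-property it suffices to show $\GCD(\rho_0,i)=\GCD(\rho_0,j)$. This is the crux: I must show that $\GCD(k\rho_0,i)=\GCD(k\rho_0,j)$ forces $\GCD(\rho_0,i)=\GCD(\rho_0,j)$ — but this is \emph{false} in general (e.g. $\rho_0=3$, $k=3$: $i=3$, $j=6$ both have $\GCD(9,\cdot)=3$ but $\GCD(3,3)=3\ne\GCD(3,6)=3$ — actually equal here; a genuine failure: $i=2,j=4$ with $k\rho_0=9$ give $\GCD=1$, and $\GCD(3,2)=1=\GCD(3,4)$, still fine). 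In fact $\GCD(\rho_0,i)$ is determined by $\GCD(k\rho_0,i)$ only after intersecting: $\GCD(\rho_0,i)=\GCD(\rho_0,\GCD(k\rho_0,i))$, which depends \emph{only} on $\GCD(k\rho_0,i)$. So if $\GCD(k\rho_0,i)=\GCD(k\rho_0,j)$ then indeed $\GCD(\rho_0,i)=\GCD(\rho_0,\GCD(k\rho_0,i))=\GCD(\rho_0,\GCD(k\rho_0,j))=\GCD(\rho_0,j)$, and the converse goes through cleanly.

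The main obstacle, then, is not the converse (which reduces to the elementary identity $\GCD(\rho_0,i)=\GCD(\rho_0,\GCD(k\rho_0,i))$) but the forward direction: the subtlety is that two residues $a\not\equiv b\pmod{k\rho_0}$ can be forced equal by the coarser $\rho_0$-condition yet have different $\GCD$ with $k\rho_0$, so one cannot argue constituent-by-constituent naively — one must exploit the freedom to choose, within each residue class mod $\rho_0$, a representative mod $k\rho_0$ with a prescribed $\GCD$. I would isolate this as a small number-theoretic lemma: \emph{for any $a$ with $\GCD(\rho_0,a)=d$ and any $k$, there exists $i\equiv a\pmod{\rho_0}$, $0\le i<k\rho_0$, with $\GCD(k\rho_0,i)=d$}, proved by writing $i=a+\rho_0 s$ and choosing $s$ so that $\GCD(k,i/d)=1$, which is possible because as $s$ ranges over a complete residue system mod $k$, $i/d=a/d+(\rho_0/d)s$ does too (as $\GCD(\rho_0/d,k)$ may not be $1$, one instead invokes Dirichlet or a direct covering argument). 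Once that lemma is in hand, both directions are short.
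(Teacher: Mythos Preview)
Your proposal is correct and follows essentially the same route as the paper: the direction $\rho_0\Rightarrow k\rho_0$ goes through the identity $\GCD(\rho_0,i)=\GCD(\rho_0,\GCD(k\rho_0,i))$, and the direction $k\rho_0\Rightarrow\rho_0$ goes through the lemma that for each $a$ with $\GCD(\rho_0,a)=d$ there is a representative $a+m\rho_0$ with $\GCD(k\rho_0,a+m\rho_0)=d$. The only difference is that where you gesture at Dirichlet or a CRT/covering argument for the existence of such an $m$, the paper writes down an explicit choice, namely $m=\rad(k)/\rad(\GCD(a/d,k))$, and checks prime by prime that it works---so Dirichlet is unnecessary and the argument stays entirely elementary.
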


\begin{proof}
Suppose that $Q$ has the $\GCD$-property for the 
minimal period $\rho_0$. 
Note that $\GCD(\rho_0, i)=\GCD(\rho_0, \GCD(k\rho_0, i))$. 
Hence if 
$\GCD(k\rho_0, i)=\GCD(k\rho_0, j)$, then 
$\GCD(\rho_0, i)=\GCD(\rho_0, j)$. 
Therefore, it satisfies the $\GCD$-property for $k\rho_0$. 

Conversely, suppose $Q$ has the $\GCD$-property for $k\rho_0$. 
Suppose $\GCD(i, \rho_0)=\GCD(j, \rho_0)=d$. We shall prove 
$f_i=f_d=f_j$. Without loss of generality, we may assume that 
$i=d=\GCD(j, \rho_0)$. It suffices to show that there exists 
an integer $m\in\Z$ such that 
$i=\GCD(i, k\rho_0)=\GCD(j+m\rho_0, k\rho_0)$. 
This means, by the $\GCD$-property for $k\rho_0$, that 
$f_i=f_{j+m\rho_0}=f_j$. 
Define 
\[m=\frac{\rad (k)}{\rad (\GCD(\frac{j}{i}, k))}, \] 
where $\rad (a)$ is the product of all primes dividing $a$. 

By using that definition, it follows from 
$\GCD(\frac{j}{i}+m\frac{\rho_0}{i}, \frac{\rho_0}{i})= 
\GCD(\frac{j}{i}, \frac{\rho_0}{i})=1$ that 
$GCD(\frac{j}{i}+m\frac{\rho_0}{i}, k\frac{\rho_0}{i})=
\GCD(\frac{j}{i}+m\frac{\rho_0}{i}, k)$. 
Now let $r\in \mathbb{N}$ be a prime such that $r|k$. 
By the definition of $m$, 
$r$ divides $m$ if and only if $r$ does not divide $\frac{j}{i}$. 
Hence, $\GCD(\frac{j}{i}+m\frac{\rho_0}{i}, k)=1$, and we also have 
$\GCD(\frac{j}{i}+m\frac{\rho_0}{i}, k\frac{\rho_0}{i})=1$. 
Therefore, $\GCD(j+m\rho_0, k\rho_0)=i$. 
\end{proof}

\begin{remark}
\label{rem:indsym}
Similarly, being symmetric is independent of the period.
\end{remark}

\section{Ehrhart quasi-polynomials for rational polytopes}
\label{sec:eqp}

\subsection{Notation}

A polytope $P\subset\R^d$ is called \emph{centrally symmetric} if 
there exists $c\in\R^d$ such that $P=c+(-P)$. 
The point $\frac{c}{2}\in P$ serves as the center of $P$. 
A \emph{zonotope} $\zono(u_1, \dots, u_n)$, 
formed by vectors $u_1,\ldots , u_n\in \mathbb{R}^d$, is 
the Minkowski sum of the line segments $[0, u_i]$. 
In other words,  
\[
\zono(u_1, \dots, u_n)=
\{\lambda_1 u_1+\dots+\lambda_n u_n \mid 
0\leq \lambda_i\leq 1, i=1, \dots, n\}. 
\]
It is easily seen that zonotopes are centrally symmetric. 

For a polytope $P\subset\R^d$, we denote the minimal 
affine subspace containing $P$ by $\aff(P)$. 
Additionally, we denote by $\aff_0(P)$ the linear subspace 
of $\R^d$ parallel to $\aff(P)$ and contains the origin. 
The dimension of a polytope $P$ is defined as 
$\dim \aff_0(P)$ and is denoted by $\dim P$.

Let $P$ be a lattice polytope. 
If $X\subset P$ is an $m$-dimensional face, 
then $\aff(X)\cap\Z^d\simeq\Z^m$. 
The \emph{relative volume} of the polytope $X$ is the volume of $X$ 
rescaled so that the unit cube in $\aff(X)\cap \Z^d\simeq\Z^m$ 
has volume 1. 
We denote the relative volume of $X$ by $\relvol(X)$. 
The $k$-dimensional Euclidean volume is denoted by $\vol_k$. 
Note that if $P\subset \R^d$ is a $d$-polytope 
(a polytope of dimension $d$), then $\relvol(P)=\vol_d(P)$. 
For a rational polytope $P$ of dimension $m\le d$, 
the leading coefficient of 
every constituent (the coefficient in degree $m$)  of $L_P(t)$ 
is the relative volume $\relvol(P)$ of $P$.

\begin{definition}
A polytope $P\subset \mathbb{R}^d$ is called 
\textit{almost integral}, if there exists a lattice  polytope 
$P'\subset \mathbb{R}^d$ and a translation vector 
$c\in \mathbb{Q}^d$, such that $P=c+P'$.
\end{definition}

A period of the Ehrhart quasi-polynomial $L_P$ of an 
almost integral polytope $P=c+P'$ with the translation vector 
$c=(c_1, \dots, c_d)\in \Q^d$ is 
$\den(c):=\lcm\{\den(c_i)|i=1,\ldots,d\}\}$, 
where $\den(c_i)$ denotes the denominator 
of the reduced fraction of $c_i$. 
It is expected that $\den(c)$ is the minimal period of $L_P$. See 
\S \ref{sec:min} for a related discussion.

\subsection{Ehrhart quasi-polynomials for almost integral zonotopes}

The following proposition by Ardila, Beck, and McWhirter 
describes the Ehrhart quasi-polynomial of almost integral zonotopes.

\begin{proposition}\cite[Proposition 3.1]{ard}
\label{aiz}
Let $U\in \Z^d$ be a finite set of integer vectors, and 
let $c\in \Q^d$ be a rational vector. 
Then the Ehrhart quasi-polynomial of 
the almost integral zonotope $c+\zono (U)$ is given by 
\begin{equation}
L_{c+\zono (U)}(t)=\sum_{\substack{W \subseteq U \\ W \textrm{ lin. indep.}}} \chi_W(t)\cdot \relvol(\zono (W)) \cdot t^{|W|}, 
\end{equation}
where 
\[\chi_W(t)=\begin{cases} 1, & \text{if } (tc+\aff (W))\cap \Z^d \neq \varnothing \\ 
0, & \text{otherwise} .\end{cases}\]
\end{proposition}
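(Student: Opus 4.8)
The plan is to reduce the count $L_{c+\zono(U)}(t)=\#\bigl(t(c+\zono(U))\cap\Z^d\bigr)$ to a sum over the classical (Shephard) decomposition of a zonotope into half-open parallelepipeds. First I would invoke this decomposition: after fixing a generic $\xi\in(\R^d)^*$, there is a disjoint decomposition
\[
\zono(U)=\bigsqcup_{\substack{W\subseteq U\\ W\ \text{lin. indep.}}}\bigl(v_W+\Pi_W\bigr),
\]
where, for each linearly independent $W=\{u_{i_1},\dots,u_{i_k}\}$, the set $\Pi_W=\{\sum_{j=1}^k\lambda_j u_{i_j}\mid \lambda_j\in[0,1)\ \text{or}\ (0,1]\}$ is a half-open version of the parallelepiped $\zono(W)$ (which of the two intervals occurs in each coordinate being prescribed by $\xi$), and $v_W$ is a sum of certain vectors of $U$, hence an \emph{integer} vector. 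Dilating by $t$ and translating by $tc$ preserves disjointness, so
\[
t(c+\zono(U))=\bigsqcup_{W}\bigl(tc+tv_W+t\Pi_W\bigr),\qquad
L_{c+\zono(U)}(t)=\sum_{W}\#\bigl((tc+tv_W+t\Pi_W)\cap\Z^d\bigr),
\]
and it remains to evaluate each summand.

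Next I would fix $W$ and put $\Lambda_W:=\aff(W)\cap\Z^d$ — where, for linearly independent $W$, $\aff(W)=\Span(W)=\aff(\zono(W))$ — and $\Lambda_W':=\sum_{i\in W}\Z u_i\subseteq\Lambda_W$. The piece $tc+tv_W+t\Pi_W$ lies in the affine subspace $tc+tv_W+\aff(W)$; since $tv_W\in\Z^d$, this subspace meets $\Z^d$ exactly when $(tc+\aff(W))\cap\Z^d\neq\varnothing$, that is, exactly when $\chi_W(t)=1$. If $\chi_W(t)=0$ the summand is $0$. If $\chi_W(t)=1$, pick $q\in(tc+tv_W+\aff(W))\cap\Z^d$; then the summand equals $\#\bigl((tc+tv_W-q+t\Pi_W)\cap\Lambda_W\bigr)$, and $tc+tv_W-q+t\Pi_W$ is a translate inside $\aff(W)$ of the half-open parallelepiped $t\Pi_W$. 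Since $t\Pi_W$ is a fundamental domain for the lattice $t\Lambda_W'$, every translate of it contains exactly $[\Lambda_W:t\Lambda_W']$ points of $\Lambda_W$, and
\[
[\Lambda_W:t\Lambda_W']=[\Lambda_W:\Lambda_W']\cdot[\Lambda_W':t\Lambda_W']=\relvol(\zono(W))\cdot t^{|W|},
\]
because $\zono(W)$ is a fundamental parallelepiped for $\Lambda_W'$ (so its relative volume equals the sublattice index $[\Lambda_W:\Lambda_W']$) and $\Lambda_W'\cong\Z^{|W|}$. Substituting back yields the asserted formula.

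The main obstacle will be the half-open decomposition itself, not the lattice-point bookkeeping: one must verify, or cite with the correct hypotheses, that the tiling holds for an arbitrary finite $U\subseteq\Z^d$, that each linearly independent subset contributes exactly one tile, that the $\xi$-prescribed endpoint pattern really makes each $\Pi_W$ a fundamental domain for $\Lambda_W'$, and that each $v_W$ lies in $\Z^d$. Everything here is independent of $c$ and $t$, which is precisely what confines the entire $c$- and $t$-dependence to the indicator $\chi_W(t)$; the identity $\relvol(\zono(W))=[\Lambda_W:\Lambda_W']$ is immediate from the definition of relative volume recalled above. (One could instead argue by deletion–contraction induction on $|U|$, mirroring the matroid recursion for the characteristic polynomial, but the half-open decomposition exhibits the indexing set and the coefficients $\relvol(\zono(W))$ in a single stroke.)
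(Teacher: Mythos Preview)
The paper does not supply its own proof of this proposition: it is quoted verbatim from Ardila--Beck--McWhirter \cite{ard} and then used as a black box. So there is no in-paper argument to compare against.

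Your argument is correct and is, in fact, the standard route (and essentially the one in \cite{ard}): the half-open paving of $\zono(U)$ by translates $v_W+\Pi_W$ indexed by linearly independent $W\subseteq U$ is exactly Shephard's decomposition, and for $U\subseteq\Z^d$ each $v_W$ is a $\{0,1\}$-combination of vectors of $U$, hence integral. Dilating and translating, the only $t$- and $c$-dependent question is whether the affine slice $tc+\Span(W)$ hits $\Z^d$; once it does, the half-open box $t\Pi_W$ is a fundamental domain for $t\Lambda_W'$ and the index computation $[\Lambda_W:t\Lambda_W']=[\Lambda_W:\Lambda_W']\cdot t^{|W|}=\relvol(\zono(W))\cdot t^{|W|}$ is exactly right. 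Your reading of $\aff(W)$ as $\Span(W)=\aff(\zono(W))$ matches the paper's intended meaning (cf.\ the proof of Theorem~\ref{zono1}, where $\langle W\rangle=\Span(W)\cap\Z^d$ plays the role of your $\Lambda_W$).

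The only point worth tightening is the one you flag yourself: for the decomposition to work as stated one needs $\xi$ generic with respect to \emph{all} subsets of $U$ (so that no $u_i$ lies in the $\xi$-kernel and the endpoint conventions are unambiguous), and one should note that each independent $W$ contributes exactly one tile even when $U$ contains parallel or repeated vectors. Both facts are standard (see, e.g., Stanley or Beck--Robins), so this is a citation issue rather than a mathematical gap.
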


Let $\rho_0=\den(c)$. 
Then $\rho_0$ is a period of $L_{c+\zono(U)}(t)$. 
Proposition \ref{aiz} says 
that the $k$-th constituent of $L_{c+\zono(U)}(t)$ is given by 
\begin{equation}
\label{k-const}
f_k(t)=
\sum_{\substack{W \subseteq U \\ W \textrm{ lin. indep.}}} \chi_W(k)\cdot \relvol(\zono (W)) \cdot t^{|W|}. 
\end{equation}
The first result of this paper is the following. 
\begin{theorem}\label{zono1}
Let $P=c+\zono(U)\subset \R^d$ be an almost integral zonotope. 
Then $L_P$ 
satisfies the $\GCD$-property. 
\end{theorem}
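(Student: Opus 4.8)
The plan is to use the explicit formula for the constituents given in equation \eqref{k-const} and show directly that $f_k = f_\ell$ whenever $\GCD(\rho_0, k) = \GCD(\rho_0, \ell)$, where $\rho_0 = \den(c)$. Since the relative volumes $\relvol(\zono(W))$ and the monomials $t^{|W|}$ do not depend on $k$, it suffices to prove that for each linearly independent subset $W \subseteq U$, the indicator value $\chi_W(k)$ depends on $k$ only through $\GCD(\rho_0, k)$. In other words, the goal reduces to the purely arithmetic-geometric claim: if $\GCD(\rho_0, k) = \GCD(\rho_0, \ell)$, then $(kc + \aff(W)) \cap \Z^d \neq \varnothing$ if and only if $(\ell c + \aff(W)) \cap \Z^d \neq \varnothing$.

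To analyze $\chi_W(k)$, first I would reduce $\aff(W)$: since $W$ consists of integer vectors, $\aff(W) = \Span_\R(W)$ is a rational subspace, and the condition $(kc + \aff(W)) \cap \Z^d \neq \varnothing$ is equivalent to the image of $kc$ being zero in the quotient $\R^d / (\Span_\R(W) + \Z^d)$, which is a torus $\R^n / \Lambda$ for some lattice $\Lambda$ (where $n = d - |W|$). Write $\bar{c}$ for the image of $c$ in this torus. Then $\chi_W(k) = 1$ exactly when $k\bar{c} = 0$ in $\R^n/\Lambda$. Because $c$ is rational, $\bar{c}$ is a torsion point of some order $q$ dividing $\rho_0$, and the set of $k$ with $k\bar{c} = 0$ is precisely the set of multiples of $q$. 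Therefore $\chi_W(k) = 1 \iff q \mid k$, and the key point is that $q \mid k$ is equivalent to $q \mid \GCD(\rho_0, k)$ since $q \mid \rho_0$. Hence $\chi_W(k)$ depends only on $\GCD(\rho_0, k)$, which is exactly what we need.

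Assembling these observations: given $\GCD(\rho_0, k) = \GCD(\rho_0, \ell)$, for every linearly independent $W \subseteq U$ the order $q = q_W$ divides $\rho_0$, so $q \mid k \iff q \mid \GCD(\rho_0,k) = \GCD(\rho_0,\ell) \iff q \mid \ell$, giving $\chi_W(k) = \chi_W(\ell)$; summing \eqref{k-const} term by term yields $f_k = f_\ell$. The main obstacle is the torsion-order argument in the torus $\R^d/(\Span_\R(W) + \Z^d)$: one must verify carefully that this quotient is indeed a (possibly non-compact, but finitely generated) abelian Lie group whose torsion subgroup is finite and that the order of $\bar c$ divides $\den(c)$. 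Once the equivalence $\chi_W(k) = 1 \iff q_W \mid k$ is established, the rest is immediate from \eqref{k-const}. It is worth remarking that this simultaneously reproves symmetry ($\GCD$-property implies $f_k = f_{\rho_0 - k}$ since $\GCD(\rho_0, k) = \GCD(\rho_0, \rho_0 - k)$), consistent with the table in the introduction.
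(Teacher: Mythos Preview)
Your proposal is correct and takes essentially the same approach as the paper. The paper shows $\chi_W(k)=1$ iff $\lcm_i \den(a_i)\mid k$ by extending a $\Z$-basis of $\langle W\rangle:=\Span_\R(W)\cap\Z^d$ to one of $\Z^d$ and decomposing $c$ accordingly, which is precisely your torsion-order argument in the quotient $\R^d/(\Span_\R(W)+\Z^d)$ written out in explicit coordinates.
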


\begin{proof}
In order to prove the $\GCD$-property for 
the Ehrhart quasi-polynomials of almost integral zonotopes, 
it is sufficient to show that the function $\chi_W(t)$ 
in Proposition \ref{aiz} satisfies the $\GCD$-property. 

Let $W=\{u_1,\ldots, u_k\}\subseteq U$ be linearly independent subset. 
Denote by 
\[\langle W \rangle=\left(\sum_{i=1}^k \R u_i\right) \cap \Z^d,\] 
the intersection of the linear subspace generated by $W$ with $\Z^d$. 
By extending a $\Z$-basis of $\langle W\rangle$ to that of $\Z^d$, 
we have 
$u_{k+1}, \dots, u_d\in\Z^d$ such  that $\Z^d=\langle W\rangle\oplus
\bigoplus_{i=k+1}^d\Z u_i$. 
Decompose $c=(c_1, \dots, c_d)\in\Q^d$ as 
$c=c'+a_{k+1}u_{k+1}+\dots+a_d u_d$, where 
$c'\in \aff_0(W)\cap \Q^d $ and 
$a_i\in\Q$. Then $(tc+\aff (W))\cap \Z^d \neq \varnothing$ 
if and only if 
$ta_{k+1}, \dots, ta_d\in\Z$. 
This is also equivalent to $t$ being divisible by 
$\lcm(\den(a_{k+1}), \dots, \den(a_d))$. 
Note that $\lcm(\den(a_{k+1}), \dots, \den(a_d))$ is a divisor of 
$\rho_0=\den(c)$. 
Thus $\chi_W(t)$ depends only on $\GCD(\rho_0, t)$. 
\end{proof}

\section{Translated lattice point enumerator} 

\label{sec:tlpe}

To verify the symmetricity or the $\GCD$-property for 
quasi-polynomials, it is necessary to compare different 
constituents of a quasi-polynomial. 
Hence, we introduce a new function for this purpose. 

\begin{definition}
Let $P\subset \R^d$ be a polytope and $c\in\R^d$. 
We define the function $L_{(P, c)}(t)=\#((c+tP)\cap \Z^d)$ 
for $t\in\Z_{>0}$,  
which we shall call the \emph{translated lattice point enumerator}.
\end{definition}

\begin{theorem}
\label{tlpe}
\begin{itemize}
\item[$(1)$] 
If $P\subset \R^d$ is a lattice polytope of dimension $d$ and 
$c\in \R^d$, then $L_{(P, c)}(t)\in\Q[t]$. 
Furthermore, $\deg L_{(P c)}(t)=d$ and 
the leading coefficient of $L_{(P, c)}(t)$ is $\relvol(P)$. 
\item[$(2)$] 
Let $P\subset \R^d$ be a lattice polytope (not necessarily 
of dimension $d$) and $c\in \R^d$. Then $L_{(P, c)}(t)\in\Q[t]$. 
Furthermore, if $L_{(P, c)}(t)\neq 0$, then it is polynomial of degree $\dim P$
with the leading coefficient $\relvol(P)$. 
\end{itemize}
\end{theorem}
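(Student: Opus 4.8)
The plan is to reduce statement $(2)$ to statement $(1)$ by passing to the affine span of $P$, and to prove $(1)$ directly by a lattice-tiling/inclusion–exclusion argument over the residues of $c$ modulo $\Z^d$. Concretely, for $(1)$: fix a lattice polytope $P$ of dimension $d$ and a vector $c\in\R^d$. I would like to say that $(c+tP)\cap\Z^d$ is counted, up to a bounded error, by $\vol_d(tP)=t^d\vol_d(P)$, and that the error is itself polynomial in $t$. To make the error term exactly polynomial, I would triangulate $P$ into lattice simplices and prove the claim for a single lattice simplex $\Delta$ by induction on the dimension; the inclusion–exclusion over faces that glues the simplices back together only introduces lower-dimensional lattice polytopes, for which the inductive hypothesis (or part $(2)$ in lower dimension) applies. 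Alternatively — and this is probably cleaner — I would use the classical fact that for a lattice polytope $P$ the translate $c+P$ has the same number of lattice points as $c'+P$ whenever $c-c'\in\Z^d$, so $L_{(P,c)}(t)$ depends only on the image $\bar c$ of $c$ in $\R^d/\Z^d$, and then write $L_{(P,c)}(t)=\sum_{v\in\Z^d}\mathbf 1_{v\in c+tP}=\sum_{v\in\Z^d}\mathbf 1_{v-tc\in P}$, re-expressing this as a sum over lattice points of $P$ scaled appropriately. The key algebraic input is that the ``fractional part'' contributions organize into a genuine polynomial: this follows because $c+tP$ and $c+(t+\rho)P$ differ by a lattice-periodic shift once $\rho$ kills the denominator, combined with a finite-difference argument.

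Here is the cleanest route I would actually write. Step 1: reduce to $c\in\Q^d$. For irrational $c$ one perturbs; more honestly, the statement for general $c\in\R^d$ should be deduced by noting that the function $t\mapsto L_{(P,c)}(t)$ agrees with $L_{(P,c')}(t)$ for a suitable rational $c'$ on a residue class, but since we want a single polynomial valid for all $t$, I would instead directly show the count is continuous in $c$ on the region where $\partial(tP)$ avoids $\Z^d$ and argue the polynomial identity persists. (It may be cleanest to first do $c\in\Q^d$ and remark that the general case follows since only $c \bmod \Z^d$ and, for the boundary, a measure-zero set of $c$ matters.) Step 2: for $c\in\Q^d$ with $\den(c)=\rho$, observe that $c+\zono(\text{standard basis})$ tiles space under $\Z^d$, hence $L_{(P,c)}(t)$ equals the number of translates $v+[0,1)^d$ ($v\in\Z^d$) that meet... — no, better: use that $L_{(P,c)}(t)=\#\big((tP)\cap(\Z^d-c)\big)$ and $\Z^d-c$ is a lattice coset. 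Step 3: apply Ehrhart's theorem for the \emph{rational} polytope $\rho P$ together with the substitution $t\mapsto t/\rho$? That gives a quasi-polynomial, not a polynomial, so the real content is showing the quasi-polynomial is constant in $t \bmod \rho$ — which is exactly the periodicity $L_{(P,c)}(t+\rho)$ relates to $L_{(P,c+\,?)}(t)$. The honest mechanism: since $P$ is a \emph{lattice} polytope, $\rho P$ is again a lattice polytope, so $L_{\rho P}(s)=\#(s\rho P\cap\Z^d)$ is a genuine \emph{polynomial} in $s$; then $L_{(P,c)}(t)$ for $t$ in a fixed residue class mod $\rho$ is extracted from $L_{\rho P}$ by a lattice-coset count that I claim is itself polynomial in $t$ via finite differences on the lattice $\Z^d$ refined by $\frac1\rho\Z^d$.

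Step 4 (deriving $(2)$ from $(1)$): if $\dim P=m<d$, restrict to $\aff(P)$. If $c\notin\aff(P)+\Z^d$ — equivalently the affine lattice $\aff(P)\cap(\Z^d-c)$ is empty — then $L_{(P,c)}(t)\equiv 0$ for all $t$ in the relevant class, but for a single polynomial statement one uses that $(c+tP)\subset c+\aff_0(P)$, an affine subspace that is a $\Z^d$-coset of $\aff(P)$; if this coset contains no lattice point then $L_{(P,c)}(t)=0$ identically in $t$, giving the zero polynomial. If it does contain lattice points, identify $\aff(P)\cap\Z^d\cong\Z^m$, transport $P$ and $c$ into $\R^m$, and apply $(1)$ in dimension $m$: the leading coefficient is $\vol_m$ computed in the $\Z^m$-normalization, which is precisely $\relvol(P)$, and the degree is $m=\dim P$.

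The main obstacle is Step 3 — proving that the lattice-point count, which a priori is only quasi-polynomial (via Ehrhart applied to the rational polytope $tP$ with $t$ ranging over a residue class, or via Ehrhart for $\rho P$), is in fact a \emph{single polynomial} with no dependence on the residue of $t$. The mechanism that forces this is that $P$ itself has integer vertices, so translating $P$ by a lattice vector does not change its lattice-point count; the subtlety is bookkeeping the interaction between the scaling parameter $t$ and the fixed denominator $\rho$ of $c$, and making the finite-difference argument rigorous rather than hand-wavy. Everything else — the triangulation, the inclusion–exclusion over faces, the descent to $\aff(P)$, the identification of the leading coefficient with $\relvol(P)$ — is routine once Step 3 is in place.
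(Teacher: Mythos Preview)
Your reduction of $(2)$ to $(1)$ in Step~4 is correct and matches the paper's own reduction exactly: if $(c+\aff(P))\cap\Z^d=\varnothing$ then $L_{(P,c)}\equiv 0$, and otherwise one translates by a lattice point into $\aff_0(P)\cong\R^m$ and applies $(1)$ there, which gives the right leading coefficient $\relvol(P)$.

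The gap is in $(1)$. You propose several outlines (triangulation plus induction, coset counting in $\tfrac{1}{\rho}\Z^d$, relating to $L_{\rho P}$), but you yourself flag Step~3 as ``the main obstacle'' and never close it. None of the sketches actually produces a single polynomial valid for \emph{all} positive integers $t$: the triangulation reduces to simplices but you give no argument for a simplex; the substitution $t\mapsto t/\rho$ into $L_{\rho P}$ is not meaningful since $t$ need not be divisible by $\rho$; and the ``finite-difference'' mechanism is left as a slogan. The detour through rational $c$ also creates an unnecessary irrational-case headache that you do not resolve.

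The paper's argument is quite different and avoids all of this. It works directly for arbitrary $c\in\R^d$ with no rational/irrational split. The idea is to compare $L_{(P,c)}(t)$ with the ordinary Ehrhart polynomial $L_P(t)$ via the ``prism'' $tP+[0,c]$: setting
\[
L(t)=\bigl((tP+[0,c])\setminus(c+tP)\bigr)\cap\Z^d,\qquad
N(t)=\bigl((tP+[0,c])\setminus tP\bigr)\cap\Z^d,
\]
one gets the exact identity
\[
L_{(P,c)}(t)+\#L(t)=L_P(t)+\#N(t).
\]
The sets $L(t)$ and $N(t)$ are supported on the \emph{lower} and \emph{upper} faces of $P$ with respect to the direction $c$, each of dimension $\le d-1$; by the inductive hypothesis (part $(2)$ in lower dimension) their cardinalities are polynomials in $t$ of degree $\le d-1$. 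Hence $L_{(P,c)}(t)$ is a polynomial, and since the correction terms have degree $<d$, its leading term is that of $L_P(t)$, namely $\relvol(P)\,t^d$. This is the missing idea in your proposal.
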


\begin{proof}
We begin by proving that $(1)$ implies $(2)$. 
Suppose that $\dim P=k<d$. If $(c+\aff(P))\cap\Z^d=\varnothing$, 
then clearly 
$L_{(P,c)}(t)=0$. 
Now, assume that $(c+\aff(P))\cap\Z^d\neq\varnothing$. 
Let $c'\in (c+\aff(P))\cap\Z^d$. Then we can express it as 
\[P+c=(P+c')+(c-c'),\] 
where $P+c'$ is a lattice polytope in $(c+\aff(P))\cap\Z^d$ 
and $(c-c')\in \aff_0(P)$ serves as a translation vector. 
Since $c'+P$ is of full dimension in $\aff(c'+P)$, 
which is isomorphic to $\R^k$, we can apply $(1)$ to conclude $(2)$. 

Now, let us prove $(1)$. 
We define two sets as follows. 
\[\begin{aligned} L(t)&=\bigl((tP+[0,c])\backslash (c+tP)\bigr)\cap \Z^d \quad \text{``lost points''} \\ 
N(t)&=\bigl((tP+[0,c])\backslash tP\bigr)\cap \Z^d \quad \text{``new points''}. \\
\end{aligned} \]
Denote by $\ell (t)$ the number of lost points, 
that is, $\ell (t)=\# L(t)$, and by $n (t)$ the number of new 
points, that is, 
$n (t)=\#N(t)$. 
It is easily seen that 
\begin{equation}
\label{eq:lpc}
\#((tP+[0, c])\cap\Z^d)=L_{(P,c)}(t)+\ell (t)=L_P(t)+ n (t).
\end{equation}
To prove $L_{(P, c)}(t)$ is a polynomial, 
it is sufficient to show that $\ell (t)$ and $n (t)$ 
are polynomials. We shall prove this fact by induction on $d$. 

For $d=0$ there exists just one polytope $P=\{0\}$ 
with one translation vector $c=0$. 
Hence, $L_{(P,c)}(t)=L_P(t)=1$. 

Next, let $P\subset \R^d$ be a lattice polytope of dimension $d$ 
with $c\in \R^d$. If $c=0$, then $L_{(P,c)}(t)=L_P(t)$, 
which is equal to the Ehrhart polynomial. 

Suppose $c\neq0$. We call a face $F$ of $P$ an ``upper face'' 
if, $(tc+F)\cap P=\varnothing$ for any $t\in\R_{>0}$. 
Similarly, a face $F$ of $P$ is a ``lower face'' 
if, $(tc+F)\cap P=\varnothing$ for any $t\in\R_{<0}$. 
Then we have 
\[
N(t)=\bigcup_{\substack{F: \text{ upper face of } P,\\ 0<s\leq 1}}
\left((sc+tF)\cap\Z^d\right). 
\]
When $\bigl(c+\aff (F)\bigr)\cap \Z^d\neq\varnothing$, 
choose $c'\in\bigl(c+\aff (F)\bigr)\cap \Z^d$. 
Then $F+c=F+c'+(c-c')$ with $c-c'\in\aff_0(F)$. 
By inductive assumption, we have that 
$L_{(F+c', c-c')}(t)$ is a polynomial in $t\in\Z_{>0}$. 
By using the inclusion-exclusion principle, 
$n(t)$ is expressed as a signed sum of translated lattice point 
enumerators $L_{(F+c', c-c')}(t)$. 
Therefore, $n(t)$ is a polynomial in $t\in\Z_{>0}$. 

By definition of the volume of $P$, we have 
\[
\lim_{t\to\infty}\frac{L_{(P, c)}(t)}{t^d}=\relvol(P). 
\]
Therefore, $\deg L_{(P, c)}(t)=d$ and the leading coefficient is 
$\relvol(P)$. 
\end{proof}

\begin{example}
Let $P=\conv\{(1,0)^t,(0,1)^t,(0,2)^t,(1,3)^t,(2,1)^t\}\subset \R^2$ 
be a lattice polytope and $c=(\frac{3}{4},\frac{3}{4})^t\in \Q^d$ 
be a rational translation vector (Figure \ref{fig:exUL}). 
Then, the upper faces of $P$ 
with respect to $c$ are $[(1,3)^t,(2,1)^t],\{(1,3)^t\},\{(2,1)^t\}$, 
while the lower faces can be described as 
$[(0,2)^t,(0,1)^t],[(0,1)^t,(1,0)^t]$ with their corresponding 
vertices, which is illustrated in Figure \ref{fig:exUL}. 
The set of lost points is 
$L(1)=\bigl((P+[0,c])\backslash (c+P)\bigr)\cap \Z^2=
\{(1,0)^t,(0,1)^t,(0,2)^t,(1,1)^t\}$ and 
the newly obtained points 
$N(1)=\bigl((P+[0,c])\backslash P\bigr)\cap \Z^2=\{(2,2)^t,(2,3)^t\}$, 
as seen in Figure \ref{fig:exNL}. 
In that manner we obtain the following number of lattice points. 

\begin{center}
\begin{tabular}{|l|c|c|c|} 
\hline 
$t$ & 0 & 1 & 2 \\ 
\hline 
$L_{(P,c)}(t)$ & 0 & 5 & 17 \\ 
\hline 
$L_P(t)$ & 1 & 7 & 20 \\ 
\hline 
$l_{c+P}(t)$ & 1 & 4 & 7 \\ 
\hline 
$n_{c+P}(t)$ & 0 & 2 & 4 \\ 
\hline 
\end{tabular}
\end{center}
From this, we calculate the polynomials as follows. 
\[\begin{aligned}
L_{(P,c)}(t)& =\frac{7}{2}t^2+\frac{3}{2}t,   & L_P(t)&=\frac{7}{2}t^2+\frac{5}{2}t+1, \\
\ell (t)&=3t+1, & n (t)&=2t.
\end{aligned}\]
\end{example}

\begin{figure}[htbp]
\centering
\begin{subfigure}[t]{0.45\textwidth}
\centering
\begin{tikzpicture}[scale=1.2]

\filldraw[fill=gray!20!white, draw=black, very thin] 
(0,1)--(0,2)--(1,3)--(2,1)--(1,0)--cycle;

\fill[black] (0,0) circle (0.05);
\fill[black] (0,1) circle (0.05);
\fill[black] (0,2) circle (0.05);
\fill[black] (0,3) circle (0.05);
\fill[black] (0,4) circle (0.05);
\fill[black] (1,0) circle (0.05);
\fill[black] (1,1) circle (0.05);
\fill[black] (1,2) circle (0.05);
\fill[black] (1,3) circle (0.05);
\fill[black] (1,4) circle (0.05);
\fill[black] (2,0) circle (0.05);
\fill[black] (2,1) circle (0.05);
\fill[black] (2,2) circle (0.05);
\fill[black] (2,3) circle (0.05);
\fill[black] (2,4) circle (0.05);
\fill[black] (3,0) circle (0.05);
\fill[black] (3,1) circle (0.05);
\fill[black] (3,2) circle (0.05);
\fill[black] (3,3) circle (0.05);
\fill[black] (3,4) circle (0.05);
\draw[->] (-0.5,0)--(3.25,0);
\draw[->] (0,-0.5)--(0,4.25);

\draw[ultra thick] (1,3)--(2,1)node[pos=0.3,right]{upper faces};
\draw[ultra thick] (1,0)--(0,1)--(0,2) node[midway, left]{lower faces};

\draw[->,red,thick] (0,0)--(0.75,0.75) node[right]{$c$};

\end{tikzpicture}
\caption{Upper and lower facets of $P$.}
\label{fig:exUL}
\end{subfigure}
\begin{subfigure}[t]{0.45\textwidth}
\centering
\begin{tikzpicture}[scale=1.2]

\fill[black] (0,0) circle (0.05);
\fill[black] (0,1) circle (0.05);
\fill[black] (0,2) circle (0.05);
\fill[black] (0,3) circle (0.05);
\fill[black] (0,4) circle (0.05);
\fill[black] (1,0) circle (0.05);
\fill[black] (1,1) circle (0.05);
\fill[black] (1,2) circle (0.05);
\fill[black] (1,3) circle (0.05);
\fill[black] (1,4) circle (0.05);
\fill[black] (2,0) circle (0.05);
\fill[black] (2,1) circle (0.05);
\fill[black] (2,2) circle (0.05);
\fill[black] (2,3) circle (0.05);
\fill[black] (2,4) circle (0.05);
\fill[black] (3,0) circle (0.05);
\fill[black] (3,1) circle (0.05);
\fill[black] (3,2) circle (0.05);
\fill[black] (3,3) circle (0.05);
\fill[black] (3,4) circle (0.05);

\draw[->] (-0.5,0)--(3.25,0);
\draw[->] (0,-0.5)--(0,4.25);

\draw[-, green, thin] (1.33,3.33)--(2.33,1.33);
\draw[-, green, thin] (1.66,3.66)--(2.66,1.66);

\draw[-, red, thin] (1,0)--(0,1)--(0,2);
\draw[-, red, thin] (1.5,0.5)--(0.5,1.5)--(0.5,2.5);

\filldraw[fill=green, draw=black] (2,2) circle (2pt) ;
\filldraw[fill=green, draw=black] (2,3) circle (2pt) node[above right]{$N$};

\filldraw[fill=red, draw=black] (1,0) circle (2pt) ;
\filldraw[fill=red, draw=black] (0,1) circle (2pt) ;
\filldraw[fill=red, draw=black] (0,2) circle (2pt) ;
\filldraw[fill=red, draw=black] (1,1) circle (2pt) node[above right]{$L$};

\end{tikzpicture}
\caption{The sets $N$ and $L$ for the polytope $P$.}
\label{fig:exNL}
\end{subfigure}
\label{fig:newlost}
\caption{New and lost points}
\end{figure}
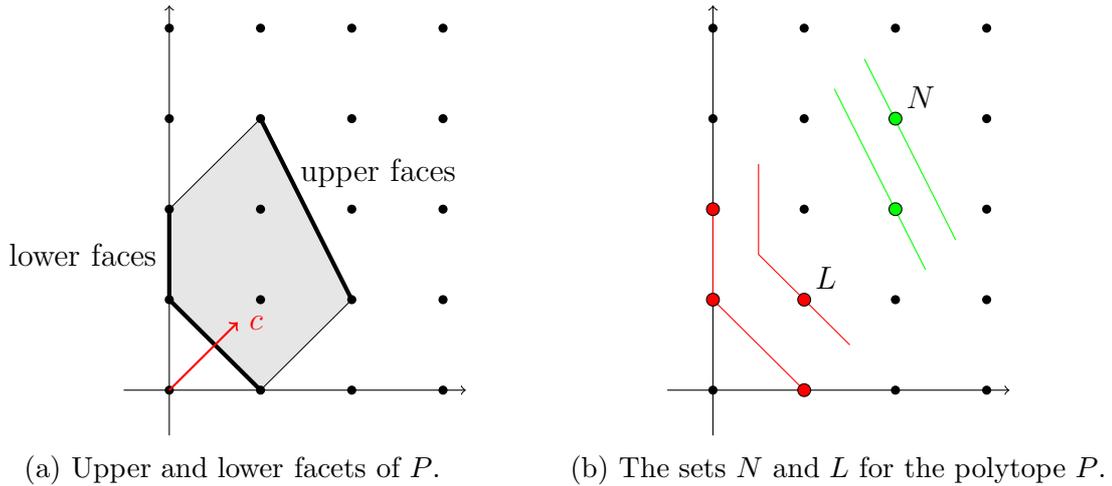

We can express constituents of the Ehrhart quasi-polynomial 
using the translated lattice point enumerator. 

\begin{corollary}
\label{cor:consti}
Let $P\in \R^d$ be a lattice polytope, and $c\in\Q^d$. 
Then the $k$-th constituent of $L_{c+P}(t)$ is $L_{(P, kc)}(t)$. 
\end{corollary}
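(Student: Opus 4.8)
The plan is to identify, for each residue class $k \bmod \rho_0$ (where $\rho_0 = \den(c)$), the lattice points of $(tc + tP) \cap \Z^d$ with the lattice points of a translated copy of $tP$, and then apply Theorem \ref{tlpe}. The key observation is that $L_{c+P}(t)$ counts $\#((tc + tP) \cap \Z^d)$, so the $k$-th constituent is the polynomial that agrees with $t \mapsto \#((tc + tP) \cap \Z^d)$ whenever $t \equiv k \pmod{\rho_0}$.

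\begin{proof}
By definition, $L_{c+P}(t) = \#(t(c+P) \cap \Z^d) = \#((tc + tP) \cap \Z^d)$. Fix $k$ with $0 \le k < \rho_0$ and suppose $t \equiv k \pmod{\rho_0}$, say $t = k + m\rho_0$ with $m \in \Z_{\ge 0}$. Since $\rho_0 = \den(c)$, the vector $\rho_0 c$ lies in $\Z^d$, hence $m\rho_0 c \in \Z^d$, and therefore translation by $-m\rho_0 c$ is a bijection of $\Z^d$ onto itself. Consequently
\[
\#((tc + tP) \cap \Z^d) = \#\bigl((tc - m\rho_0 c + tP) \cap \Z^d\bigr) = \#\bigl((kc + tP) \cap \Z^d\bigr) = L_{(P, kc)}(t).
\]
Thus the function $t \mapsto L_{c+P}(t)$ agrees with $t \mapsto L_{(P, kc)}(t)$ on every $t$ in the residue class $k \bmod \rho_0$. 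By Theorem \ref{tlpe}, $L_{(P, kc)}(t) \in \Q[t]$ is a polynomial in $t$. Since $\rho_0$ is a period of $L_{c+P}(t)$ and the polynomial $L_{(P,kc)}(t)$ coincides with the $k$-th constituent on infinitely many values of $t$, the two polynomials are equal. Therefore the $k$-th constituent of $L_{c+P}(t)$ is $L_{(P, kc)}(t)$.
\end{proof}

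There is essentially no obstacle here: the only point requiring a small amount of care is the bijectivity of the translation $x \mapsto x - m\rho_0 c$ on $\Z^d$, which is exactly where the choice $\rho_0 = \den(c)$ is used, and the invocation of Theorem \ref{tlpe} to guarantee that $L_{(P,kc)}(t)$ is genuinely a polynomial (and not merely a quasi-polynomial) so that it can be identified with a single constituent. The statement is really a reformulation of the definitions once these two facts are in place.
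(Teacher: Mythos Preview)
Your proof is correct and follows essentially the same approach as the paper: both arguments observe that when $t\equiv k\pmod{\rho_0}$ the difference $tc-kc$ lies in $\Z^d$, so $\#((tc+tP)\cap\Z^d)=\#((kc+tP)\cap\Z^d)=L_{(P,kc)}(t)$, and then identify this with the $k$-th constituent. Your version is a bit more explicit in writing out $t=k+m\rho_0$ and in invoking Theorem~\ref{tlpe} to justify that $L_{(P,kc)}(t)$ is a polynomial, but the substance is identical.
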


\begin{proof}
Let $f_k$ be the $k$-th constituent of the 
Ehrhart quasi-polynomial $L_{c+P}$. 
Let $\rho=\den(c)$. Then $L_{c+P}$ has a period $\rho$. 
Since a translation by a vector in $\Z^d$ does not affect 
the number of lattice points, we get
\[\begin{aligned}
f_k(t)&=\#(t(P+c)\cap \Z^d) \quad \text{for }t\equiv k\mod\rho\\
   &=\#((tP+tc)\cap \Z^d) \quad \text{for }t\equiv k\mod\rho\\
   &=\#((tP+kc)\cap \Z^d) \quad \text{for }t\equiv k\mod\rho\\
   &=L_{(P,kc)}(t) \quad \text{for }t\equiv k\mod\rho\\
\end{aligned}\]
\end{proof}

We now prove that 
the Ehrhart quasi-polynomial of any 
almost integral centrally symmetric polytope is symmetric. 

\begin{corollary}\label{sym1}
Let $P\in \R^d$ be a centrally symmetric lattice polytope. 
Then, for every $c\in\Q^d$, the Ehrhart quasi-polynomial 
$L_{c+P}(t)$ is symmetric. 
\end{corollary}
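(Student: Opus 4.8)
The plan is to combine Corollary \ref{cor:consti} with a reflection argument. By Corollary \ref{cor:consti}, the $k$-th constituent of $L_{c+P}(t)$ equals $L_{(P,kc)}(t)$, where I take the period $\rho=\den(c)$ (being symmetric is independent of the chosen period by Remark \ref{rem:indsym}). So I must show $L_{(P,kc)}(t)=L_{(P,(\rho-k)c)}(t)$ for $0\le k\le\rho$. Since $\rho c\in\Z^d$, we have $(\rho-k)c=\rho c-kc\equiv -kc \pmod{\Z^d}$, and translating a polytope by an integer vector leaves the number of its lattice points unchanged; hence $L_{(P,(\rho-k)c)}(t)=L_{(P,-kc)}(t)$, and it suffices to prove
\[
L_{(P,v)}(t)=L_{(P,-v)}(t)
\]
for every $v\in\R^d$.

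To prove this, first note that the doubled center of $P$ is a lattice point: writing $P=a+(-P)$ with $a\in\R^d$, for any vertex $v_0$ of $P$ the point $a-v_0$ is again a vertex of $P$, hence $a=(a-v_0)+v_0\in\Z^d$. Fix $t\in\Z_{>0}$; then $ta\in\Z^d$, so the affine involution $\psi(x)=ta-x$ maps $\Z^d$ bijectively onto itself. Using $tP=ta+(-tP)=ta-tP$ we get $\psi(v+tP)=ta-v-tP=-v+tP$. Therefore $\psi$ restricts to a bijection between $(v+tP)\cap\Z^d$ and $(-v+tP)\cap\Z^d$, which proves $L_{(P,v)}(t)=L_{(P,-v)}(t)$.

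Putting $v=kc$ and combining the two steps with Corollary \ref{cor:consti} gives $f_k=f_{\rho-k}$ for all $0\le k\le\rho$, i.e.\ $L_{c+P}(t)$ is symmetric. The argument needs no case distinction on $\dim P$: if $L_{(P,kc)}(t)$ is identically zero, then so is $L_{(P,-kc)}(t)$ by the same bijection. There is no real obstacle once Corollary \ref{cor:consti} is available; the only point to watch is the integrality $a\in\Z^d$, which is precisely where the hypothesis that $P$ is a \emph{lattice} polytope (not merely rational) enters.
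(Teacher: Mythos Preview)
Your proof is correct and follows essentially the same approach as the paper: both invoke Corollary~\ref{cor:consti} to identify the $k$-th constituent with $L_{(P,kc)}(t)$, reduce to showing $L_{(P,kc)}(t)=L_{(P,-kc)}(t)$ via the fact that $(\rho-k)c\equiv -kc\pmod{\Z^d}$, and then use central symmetry of $P$ (hence of $tP$) to match lattice-point counts under reflection. Your version is a bit more careful than the paper's in that you explicitly verify $a\in\Z^d$ for the doubled center and exhibit the bijection $\psi(x)=ta-x$, whereas the paper simply asserts that $c+P$ and $-c+P$ have the same number of lattice points.
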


\begin{proof}
Let $f_0,\ldots ,f_{\rho-1}$ be the constituents of $L_{c+P}(t)$. 
Note that since $P$ is centrally symmetric, $c+P$ and $-c+P$ 
contain the same number of lattice points. 
For $k\in \{1,\ldots,\rho-1\}$, by Corollary \ref{cor:consti}, 
we have 
\[\begin{aligned}
f_k(t)&=L_{(P,kc)}(t)\\
	  &=\#(kc+tP)\cap\Z^d \\
	  &=\#(-kc+tP)\cap \Z^d \\
	  &=\#((\rho-k)c+tP)\cap \Z^d\\
	  &=L_{(P,(\rho-k)c)}(t)=f_{\rho-k}(t). 
	\end{aligned}\] 
Therefore, $L_{c+P}(t)$ is symmetric. 
\end{proof}

\section{Characterizing centrally symmetric polytopes}
\label{sec:symm}

Recall that a polytope $P$ is characterized up to translation 
by the normal vectors and the $(d-1)$-volumes of its facets 
(this fact was first proved by Minkowski \cite{min}. See 
also \cite{gov, gru, sch}). 
From this fact, it follows. 

\begin{lemma}\label{facets}
Let $P\subset \R^d$ be a $d$-polytope. Then $P$ is centrally 
symmetric if and only if for each facet $F$ there exists a 
parallel facet $F^{\op}$, such that 
$\vol_{d-1}(F)=\vol_{d-1}(F^{\op})$, 
where $\vol_{d-1}$ is the $(d-1)$-dimensional Euclidean volume.
\end{lemma}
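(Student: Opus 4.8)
The plan is to deduce both directions from Minkowski's theorem as recalled above: a $d$-polytope is determined up to translation by the list of outer unit normal vectors of its facets together with the corresponding $(d-1)$-dimensional volumes.

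For the ``only if'' direction, suppose $P=c+(-P)$. Then the point reflection $\tau\colon x\mapsto c-x$ is an affine isometry mapping $P$ onto itself, hence an automorphism of $P$ permuting its facets. If $F$ is a facet with outer unit normal $u$, then $\tau(F)$ is a facet with outer unit normal $-u$, so $\tau(F)$ is parallel to $F$; moreover $\tau$ preserves $(d-1)$-dimensional Euclidean volume, so setting $F^{\op}:=\tau(F)$ gives $\vol_{d-1}(F)=\vol_{d-1}(F^{\op})$. One also notes $F\neq F^{\op}$, since the unique fixed point $\tfrac{c}{2}$ of $\tau$ lies in $\interior P$ (a full-dimensional convex body symmetric about a point contains a neighbourhood of that point).

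For the ``if'' direction, I would first record the elementary fact that distinct facets of a $d$-polytope have distinct outer unit normals (the face on which a linear functional attains its maximum on $P$ is unique), so that ``the parallel facet'' is unambiguous and, in particular, two parallel facets necessarily have opposite outer normals. Let $u_1,\dots,u_m$ be the outer unit normals of the facets $F_1,\dots,F_m$ of $P$, and put $v_i=\vol_{d-1}(F_i)$. The hypothesis supplies, for each $i$, an index $\sigma(i)$ with $u_{\sigma(i)}=-u_i$ and $v_{\sigma(i)}=v_i$; by the uniqueness just noted, $\sigma$ is a well-defined fixed-point-free involution of $\{1,\dots,m\}$. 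The facets of $-P$ are $-F_1,\dots,-F_m$, with outer unit normals $-u_i=u_{\sigma(i)}$ and volumes $v_i=v_{\sigma(i)}$; since $\sigma$ is a bijection, as $i$ ranges over all indices so does $\sigma(i)$, and hence $P$ and $-P$ carry exactly the same list of (outer normal, facet-volume) pairs. By Minkowski's theorem $P=c+(-P)$ for some $c\in\R^d$, i.e.\ $P$ is centrally symmetric.

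The only real subtlety, and the step I would be most careful about, is the passage from the geometric hypothesis ``every facet has a parallel facet of equal volume'' to the combinatorial statement ``$P$ and $-P$ have identical Minkowski data''; this rests entirely on the observation that in a full-dimensional polytope a facet determines its outer normal uniquely, so that ``parallel'' forces ``antipodal normal'' and the pairing $F\mapsto F^{\op}$ is a genuine involution. Everything else is a direct application of Minkowski's characterization and of the fact that $x\mapsto -x$ reverses facet normals while preserving facet volumes.
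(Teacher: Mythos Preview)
Your proof is correct and follows the same approach as the paper: both directions are derived from Minkowski's characterization, with the converse obtained by showing that $P$ and $-P$ have identical normal/volume data. Your version simply spells out in more detail what the paper leaves implicit (the antipodality of normals for parallel facets, the involution $F\mapsto F^{\op}$, and the isometry argument for the easy direction).
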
 

\begin{proof}
If $P$ is centrally symmetric, then clearly 
a facet $F$ and its $F^{\op}$ have the same volume.  
Conversely, if $\vol_{d-1}(F)=\vol_{d-1}(F^{\op})$ holds for all facets, 
$P$ and $-P$ have the same data of normal vectors and $(d-1)$-volumes. 
It follows from Minkowski's result, that $P$ is centrally symmetric. 
\end{proof}

The following is the first main result of this article. 
Centrally symmetric 
polytopes are characterized by the symmetricity of 
the Ehrhart quasi-polynomials of their translations. 

\begin{theorem}\label{charsym}
Let $P\subset \R^d$ be a lattice polytope. 
Then the following are equivalent. 
\begin{itemize}
\item[$(i)$] $P$ is centrally symmetric. 
\item[$(ii)$] $L_{c+P}(t)$ is symmetric for every $c\in\Q^d$. 
\end{itemize}
\end{theorem}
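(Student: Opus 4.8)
The plan is to prove the equivalence by handling the two implications separately, using the machinery already set up. The direction $(i)\Rightarrow(ii)$ is already done: it is exactly Corollary \ref{sym1}. So the real work is in $(ii)\Rightarrow(i)$, or equivalently its contrapositive: if $P$ is \emph{not} centrally symmetric, then there exists a rational vector $c$ such that $L_{c+P}(t)$ fails to be symmetric.

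First I would reduce to full dimension. If $\dim P < d$, I translate $P$ by a lattice vector (or note that every rational $c$ with $(c+\aff(P))\cap\Z^d\neq\varnothing$ reduces the problem to the enumerator inside $\aff(P)\cong\R^{\dim P}$ via Theorem \ref{tlpe}(2)); if $(c+\aff(P))\cap\Z^d=\varnothing$ for all relevant $c$, the quasi-polynomial is the zero function, which is trivially symmetric, and central symmetry within the affine hull is the right notion anyway. So assume $P\subset\R^d$ is a $d$-polytope that is not centrally symmetric. By Lemma \ref{facets}, there is a facet $F_0$ of $P$ such that either there is no facet parallel to $F_0$ on the opposite side, or there is one, call it $F_0^{\op}$, but $\vol_{d-1}(F_0)\neq\vol_{d-1}(F_0^{\op})$.

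The key idea is to compare the constituents $f_k$ and $f_{\rho-k}$ of $L_{c+P}(t)$, which by Corollary \ref{cor:consti} are $L_{(P,kc)}(t)$ and $L_{(P,-kc)}(t)$ (using $(\rho-k)c\equiv -kc \bmod \Z^d$). It therefore suffices to find a rational $c$ with $L_{(P,c)}(t)\neq L_{(P,-c)}(t)$ as polynomials in $t$; replacing $c$ by $kc$ does not change this, so this gives non-symmetry of the quasi-polynomial. Now I extract the relevant information from $L_{(P,c)}(t)$ by looking at subleading behavior. Following the proof of Theorem \ref{tlpe}(1), translating $P$ by $c$ loses lattice points near the "lower" facets of $P$ (with respect to the direction of $c$) and gains lattice points near the "upper" facets, and the counting functions $l_{c+P}(t)$ and $n_{c+P}(t)$ are polynomials of degree $\le d-1$ whose degree-$(d-1)$ terms are governed by the relative volumes of those facets. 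Concretely, choosing $c$ to be a small generic rational multiple of the inward normal to $F_0$ (so that $F_0$ becomes the unique lower or upper facet in that direction, and $c$ crosses exactly the right number of lattice hyperplanes parallel to $F_0$), the leading term of $l_{c+P}(t)$ or $n_{c+P}(t)$ detects $\vol_{d-1}(F_0)$ up to a lattice-index factor, while the corresponding term for $L_{(P,-c)}(t)$ detects the facet on the opposite side — either $F_0^{\op}$ with a different volume, or no facet at all. Comparing the degree-$(d-1)$ coefficients of $L_{(P,c)}$ and $L_{(P,-c)}$ via equation \eqref{eq:lpc} then yields $L_{(P,c)}(t)\neq L_{(P,-c)}(t)$.

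The main obstacle is the bookkeeping in the last step: making precise how the subleading coefficient of $L_{(P,c)}(t)$ depends on the upper/lower facets and their \emph{relative} (not Euclidean) volumes, and choosing $c$ carefully enough that (a) $c$ is rational with controlled denominator, (b) the direction of $c$ singles out $F_0$ (resp.\ $F_0^{\op}$) as the unique extreme facet, so no cancellation among several facets can occur, and (c) the lattice-normalization factors on the two sides are comparable, so that a genuine discrepancy in Euclidean volume — or the outright absence of an opposite facet — survives as a discrepancy in the coefficient. A clean way to arrange this is to pick $c$ parallel to the normal of $F_0$ with denominator chosen so that $tc+\aff(F_0)$ hits $\Z^d$ for the right residues of $t$; then for the opposite direction either the analogous affine slice never meets $\Z^d$ (if there is no opposite facet, or its lattice normalization differs), or it meets $\Z^d$ but with relative volume $\relvol(F_0^{\op})\neq\relvol(F_0)$, which one deduces from $\vol_{d-1}(F_0)\neq\vol_{d-1}(F_0^{\op})$ together with the facets being parallel (hence living in translates of the same linear subspace, so their Euclidean and relative volumes differ by the same factor). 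In all cases the degree-$(d-1)$ coefficients differ, completing the contrapositive and hence the theorem.
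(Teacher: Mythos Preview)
Your overall strategy --- prove the contrapositive of $(ii)\Rightarrow(i)$ by producing a rational $c$ with $L_{(P,c)}\neq L_{(P,-c)}$, and locate the asymmetry via Lemma~\ref{facets} --- is exactly the paper's. The gap is in your choice of $c$ and the claim about ``uniqueness.''

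You take $c$ to be a multiple of the inward normal to $F_0$ and assert that this makes $F_0$ the \emph{unique} lower (or upper) facet. That is false: for any direction $c$, every facet whose outward normal has negative $c$-component is a lower facet, and for a $d$-polytope there are generally many of these; $F_0$ is merely the one whose outward normal is exactly $-c/|c|$. Consequently the degree-$(d-1)$ term you try to isolate in $l_{c+P}$ and $n_{c+P}$ receives contributions from \emph{all} lower and upper facets, weighted by how many lattice translates of each facet's affine hull are swept through. There is no evident reason these contributions from the other facets cancel in $L_{(P,c)}-L_{(P,-c)}$. The ``bookkeeping'' you flag as an obstacle is not incidental --- with $c$ chosen normal to $F_0$ it is the entire difficulty, and you have not addressed it.

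The paper makes the opposite choice: take $c\in\aff_0(F)$, i.e.\ $c$ \emph{parallel} to the distinguished facet $F$. Then $c+\aff(F)=\aff(F)$ and $c+\aff(F^{\op})=\aff(F^{\op})$ both meet $\Z^d$, so $L_{(F,c)}$ and $L_{(F^{\op},c)}$ are nonzero polynomials of degree $m-1$ with leading coefficients $\relvol(F)\neq\relvol(F^{\op})$; meanwhile a generic such $c$ satisfies $(c+\aff(X))\cap\Z^d=\varnothing$ for every other proper face $X$, so those faces contribute nothing at all. Since $c$ is parallel to $F$, neither $F$ nor $F^{\op}$ is yet an upper or lower face; the paper then tilts $c$ slightly in two opposite ways (to $c'$ making $F$ upper and $F^{\op}$ lower, and to $c''$ doing the reverse) and shows that the two symmetry equalities $L_{(P,c')}=L_{(P,-c')}$ and $L_{(P,c'')}=L_{(P,-c'')}$ cannot simultaneously hold, as together they would force $\relvol(F^{\op})-\relvol(F)$ to equal $\relvol(F)-\relvol(F^{\op})$. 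This ``parallel, then tilt'' maneuver is the missing idea that makes the facet comparison clean.
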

\begin{proof}
The implication 
$(i) \Rightarrow (ii)$ was proved in Corollary \ref{sym1}. 
Now, let $P\subset \R^d$ be a non-centrally symmetric 
polytope of dimension $m$. 
We will prove that there exists a translation vector 
$c\in \Q^d$ such that $L_{(P,c)}(t)\neq L_{(P,-c)}(t)$ for some 
$t\in \Z_{>0}$. 
Since $P$ is not centrally symmetric, 
by using Lemma \ref{facets}, we find a facet $F$ that has 
either no parallel facet or a parallel facet $F^{\op}$ with 
different $(m-1)$-dimensional Euclidean volumes 
$\vol_{m-1}(F)\neq\vol_{m-1}(F^{\op})$. 
We only consider the latter case, because the former case 
can be considered as the latter case with $\vol_{m-1}(F^{\op})=0$. 
Since $F$ and $F^{\op}$ are parallel, the unit cubes in 
$\aff(F)\cap \Z^d$ and $\aff(F^{\op})\cap \Z^d$ have 
the same $(m-1)$-dimensional Euclidean volume. 
On the other hand $F$ and $F^{\op}$ have different volumes, 
so their relative volumes are different. 
This means for the Ehrhart polynomials 
$L_F(t)=c_{m-1}t^{m-1}+\ldots +c_0$ and 
$L_{F^{\op}}(t)=c'_{m-1}t^{m-1}+\ldots +c'_0$ that 
$c_{m-1}\neq c'_{m-1}$. 
Without loss of generality, we may assume that $c_{m-1}>c'_{m-1}$. 
Let $c\in \aff(F)_0=\aff(F^{\op})_0$ 
be a nonzero vector. By choosing $c$ generically 
(we also note that $c$ can be chosen arbitrarily small), 
we may assume $(\aff(X)+c)\cap\Z^d=\varnothing$ for every proper 
face $X\neq F, F^{\op}$. 
Then $(tX+c)\cap\Z^d=\varnothing$ for every positive 
integer $t$. 
From the argument above and Theorem \ref{tlpe}, 
the leading coefficients of 
$L_{(F, c)}(t)$ and $L_{(F^{\op}, -c)}(t)$ are different. Hence, 
we have $L_{(F, c)}(t)\neq L_{(F^{\op}, -c)}(t)$ and 
$L_{(X, c)}(t)=0$ for other proper faces $X$. Note that 
by genericity of $c$, $X$ is either an upper or a lower face. 

Next let $c'\in\aff_0(P)$ be constructed by 
inclining $c$ slightly  so that $F$ becomes 
an upper face (see Figure \ref{fig:sym}) and 
$F^{\op}$ becomes a lower face and every 
other face remains in its status 
(in particular, $c'+\partial P$ does not contain lattice points). 
Then we have 
\begin{equation}
\label{prime}
\begin{split}
L_{(P, c')}(t)&=L_{(P, c)}(t)-L_{(F^{\op}, c)}(t),\\
L_{(P, -c')}(t)&=L_{(P, -c)}(t)-L_{(F, -c)}(t).
\end{split}
\end{equation}
Next let $c''\in \Q^d$ be obtained by inclining $c$ slightly inward 
(Figure \ref{fig:sym}), such that $F$ becomes a lower face and 
$F^{\op}$ becomes an upper face. 
Then we have 
\begin{equation}
\label{second}
\begin{split}
L_{(P, c'')}(t)&=L_{(P, c)}(t)-L_{(F, c)}(t),\\
L_{(P, -c'')}(t)&=L_{(P, -c)}(t)-L_{(F^{\op}, -c)}(t).
\end{split}
\end{equation}
Now suppose that both the equations 
\begin{itemize}
\item[(a)] $L_{(P, c')}(t)=L_{(P, -c')}(t)$ and 
\item[(b)] $L_{(P, c'')}(t)=L_{(P, -c'')}(t)$  
\end{itemize}
hold. Then (\ref{prime}) and (\ref{second}) deduce 
\begin{equation}
\begin{split}
L_{(P, c)}(t)-L_{(P, -c)}(t)
&=
L_{(F^{\op}, c)}(t)-L_{(F, -c)}(t)\\
&=
L_{(F, c)}(t)-L_{(F^{\op}, -c)}(t). 
\end{split}
\end{equation}
Recall that 
the leading coefficients of $L_{(F, \pm c)}(t)$ and 
$L_{(F^{\op}, \pm c)}(t)$ 
are $c_{m-1}$ and $c'_{m-1}$, respectively. Therefore, 
the leading coefficient of 
$L_{(F^{\op}, c)}(t)-L_{(F, -c)}(t)$ is negative, while that of 
$L_{(F, c)}(t)-L_{(F^{\op}, -c)}(t)$ is positive. 
This is a contradiction. 
\end{proof}

In the proof of the above theorem, 
we may suppose (a) does not hold, 
i.e., $L_{(P, c')}(t)\neq L_{(P, -c')}(t)$. 
Then $c'$ can be perturbed slightly. That is, 
there exists a small open neighborhood 
$U\subset\aff_0(P)$ of $c'$ such that 
every replacement of $c'$ by an element of 
$U\cap\Q^d$ works similarly. 
Thus we have the following. 

\begin{corollary}
\label{openness}
Let $P\subset \R^d$ be a non-centrally symmetric lattice polytope. 
Then there exists an open set $U\subset \aff_0(P)$ such that 
$U$ intersects with any open neighborhood of $0\in\aff_0(P)$ and 
$L_{(P,c)}\neq L_{(P,-c)}$ for every translation vector $c\in U$. 
\end{corollary}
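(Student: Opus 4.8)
The plan is to reuse the vector produced in the proof of Theorem~\ref{charsym} and then to show that the property ``$L_{(P,c)}\neq L_{(P,-c)}$'' is stable under small perturbations of $c$, exploiting that for every fixed $v$ the function $t\mapsto L_{(P,v)}(t)$ is a polynomial of degree at most $\dim P$.

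\emph{Extracting one good vector.} The proof of Theorem~\ref{charsym} shows that the equalities (a)~$L_{(P,c')}(t)=L_{(P,-c')}(t)$ and (b)~$L_{(P,c'')}(t)=L_{(P,-c'')}(t)$ cannot both hold; hence at least one of $c'$ and $c''$ --- call it $c_0\in\aff_0(P)$ --- satisfies $L_{(P,c_0)}(t)\neq L_{(P,-c_0)}(t)$. Since $c'$ and $c''$ are obtained there by a slight generic inclination of a vector in $\aff_0(F)$, I am free to impose, in addition to the genericity already used, that $(c_0+t\,\partial P)\cap\Z^d=\varnothing$ and $(-c_0+t\,\partial P)\cap\Z^d=\varnothing$ for every $t\in\Z_{>0}$; this excludes only a further measure-zero set of inclinations.

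\emph{Local stability and conclusion.} Set $m=\dim P$. The key observation is that if $v\in\aff_0(P)$ and $t\in\Z_{>0}$ satisfy $(v+t\,\partial P)\cap\Z^d=\varnothing$, then $v+t\,\partial P$ is compact and disjoint from the closed discrete set $\Z^d$, hence at some positive distance $\delta$ from it; and since $tP$ is closed and convex, for every $z\in\Z^d$ and every $\tilde v$ with $\|\tilde v-v\|<\delta$ one has $z\in v+tP\iff z\in\tilde v+tP$, so $L_{(P,\tilde v)}(t)=L_{(P,v)}(t)$ on the ball of radius $\delta$ about $v$. Applying this with $v=c_0$ and with $v=-c_0$ for each of the finitely many values $t=1,2,\dots,m+1$ and intersecting the corresponding balls, I obtain an open ball $U\subseteq\aff_0(P)$ centred at $c_0$ such that $L_{(P,\tilde c)}(t)=L_{(P,c_0)}(t)$ and $L_{(P,-\tilde c)}(t)=L_{(P,-c_0)}(t)$ for $t=1,\dots,m+1$ and every $\tilde c\in U$. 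By Theorem~\ref{tlpe}(2) all of these are polynomials in $t$ of degree at most $m$, so coincidence at the $m+1$ points $1,\dots,m+1$ forces $L_{(P,\tilde c)}=L_{(P,c_0)}$ and $L_{(P,-\tilde c)}=L_{(P,-c_0)}$ as polynomials; since the right-hand sides differ, $L_{(P,\tilde c)}\neq L_{(P,-\tilde c)}$ for every $\tilde c\in U$, as required.

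The main obstacle is the bookkeeping in the first step: one has to verify that $c_0$ can be chosen generically enough for $(\pm c_0+t\,\partial P)\cap\Z^d=\varnothing$ to hold for the relevant $t$ without violating the genericity conditions already imposed in the proof of Theorem~\ref{charsym}. As each exceptional locus is a countable union of subsets of $\aff_0(P)$ of dimension $<m$, this is harmless, but it is the point that must genuinely be checked rather than asserted. A more combinatorial alternative would instead keep $c_0$ inside the cone on which the upper/lower facet decomposition of $P$ is constant and transport the identities~(\ref{prime}) and~(\ref{second}) directly; the degree-counting argument above is shorter.
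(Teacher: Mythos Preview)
Your proof is correct. The overall strategy---take a vector $c_0$ produced by Theorem~\ref{charsym} and show that the inequality $L_{(P,c_0)}\neq L_{(P,-c_0)}$ survives small perturbations---is exactly what the paper does, but you justify the stability step differently.

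The paper simply asserts that for $\tilde c$ near $c'$, ``every replacement of $c'$ by an element of $U\cap\Q^d$ works similarly''; implicitly this means the identities~(\ref{prime}) (and likewise~(\ref{second})) continue to hold with $\tilde c$ in place of $c'$, because the right-hand sides depend only on the fixed vector $c$ and so are unchanged. Your argument instead treats the conclusion $L_{(P,c_0)}\neq L_{(P,-c_0)}$ as a black box, imposes the additional genericity $(\pm c_0+t\,\partial P)\cap\Z^d=\varnothing$, and then uses local constancy of lattice-point counts together with the degree bound from Theorem~\ref{tlpe} to pin down the polynomial from finitely many values. This is the ``degree-counting'' alternative you mention at the end, and it is a genuine (if modest) methodological difference: it is more self-contained and would apply to any situation where one already has a single good vector, whereas the paper's version leans on the specific structure of the proof of Theorem~\ref{charsym}.

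Your own caveat about the bookkeeping is well placed but not a problem. The extra genericity condition $(\pm c_0+t\,\partial P)\cap\Z^d=\varnothing$ for all $t$ is, as you note, equivalent to $c_0\notin\Z^d+\aff_0(X)$ for every proper face $X$ (since $\aff(tX)$ is a lattice translate of $\aff_0(X)$ for each $t$), hence independent of $t$ and automatically symmetric in $\pm c_0$. This excludes only a countable union of lower-dimensional affine subspaces of $\aff_0(P)$, so it is compatible with choosing $c'$ and $c''$ as small inclinations for which~(\ref{prime}) and~(\ref{second}) hold; the latter identities only need to be checked for $t=1,\dots,m+1$, so a sufficiently small inclination suffices.
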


\begin{figure}[htbp]
\centering
\begin{tikzpicture}[scale=1.2]

\filldraw[fill=gray!20!white, draw=black, very thin] (1,1)--(0,2)--(2,4)--(3,4)--(5,2)--(4,1)--cycle ;

\fill[black] (0,0) circle (0.05);
\fill[black] (0,1) circle (0.05);
\fill[black] (0,2) circle (0.05);
\fill[black] (0,3) circle (0.05);
\fill[black] (0,4) circle (0.05);
\fill[black] (0,5) circle (0.05);
\fill[black] (1,0) circle (0.05);
\fill[black] (1,1) circle (0.05);
\fill[black] (1,2) circle (0.05);
\fill[black] (1,3) circle (0.05);
\fill[black] (1,4) circle (0.05);
\fill[black] (1,5) circle (0.05);
\fill[black] (2,0) circle (0.05);
\fill[black] (2,1) circle (0.05);
\fill[black] (2,2) circle (0.05);
\fill[black] (2,3) circle (0.05);
\fill[black] (2,4) circle (0.05);
\fill[black] (2,5) circle (0.05);
\fill[black] (3,0) circle (0.05);
\fill[black] (3,1) circle (0.05);
\fill[black] (3,2) circle (0.05);
\fill[black] (3,3) circle (0.05);
\fill[black] (3,4) circle (0.05);
\fill[black] (3,5) circle (0.05);
\fill[black] (4,0) circle (0.05);
\fill[black] (4,1) circle (0.05);
\fill[black] (4,2) circle (0.05);
\fill[black] (4,3) circle (0.05);
\fill[black] (4,4) circle (0.05);
\fill[black] (4,5) circle (0.05);
\fill[black] (5,0) circle (0.05);
\fill[black] (5,1) circle (0.05);
\fill[black] (5,2) circle (0.05);
\fill[black] (5,3) circle (0.05);
\fill[black] (5,4) circle (0.05);
\fill[black] (5,5) circle (0.05);

\draw (2.5,1) node[below]{$F$};
\draw (2.5,4) node[above]{$F^{\op}$};

\draw [->](4,1)--(4.5,1) node[right]{$c$};
\draw [->](4,1)--(4.5,0.8) node[below right]{$c'$};
\draw [->](4,1)--(4.5,1.2) node[above right]{$c''$};

\end{tikzpicture}
\caption{Translation by $c,c'$ and $c''$.}
\label{fig:sym}
\end{figure}
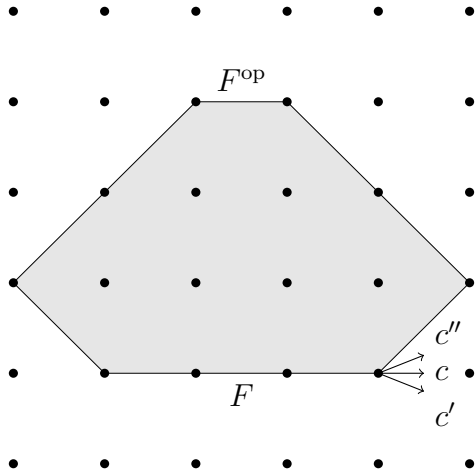

\section{Characterizing zonotopes}
\label{sec:zono}

This section deals with a characterization of lattice zonotopes in a 
similar way to Theorem \ref{charsym}. Specifically, 
if the Ehrhart quasi-polynomial of every rational shift of a 
lattice polytope satisfies the $\GCD$-property, then the 
polytope is a zonotope. 
Notice that we have already proved this statement 
for non-centrally symmetric polytopes. In fact, 
$\GCD (1,\rho)=\GCD(\rho-1,\rho)$ for all periods 
$\rho\ge 1$, but by Theorem \ref{charsym} we find a 
$c\in \Q^d$ such that $L_{(P,c)}\neq L_{(P,-c)}$. 
In order to construct a translation vector for the 
centrally symmetric polytopes that are no zonotopes, 
we consider almost constant functions:

\begin{definition}
A function $f\colon \R \to \R$ is called 
\textit{almost locally constant}, if it is locally constant 
except for a discrete set of points.
A function $f\colon \R \to \R$ is called 
\textit{almost constant}, if it is constant except for a 
discrete set of points.
\end{definition}

Let $P\subset \R^d$ be a polytope and $c\in \R^d$. 
Consider the function $L^P_c\colon \R \to \R$ defined by 
$x\mapsto \#\left((xc+P)\cap \Z^d\right)$. 
It is an almost locally constant function. 
If furthermore $c\in\Q^d$, then $L^P_c(t)$ 
is periodic with a period $\rho_0=\den(c)$. 

\begin{proposition}
\label{nongcd}
Let $P\subset\R^d$ be a lattice $d$-polytope and $c\in\Q^d$. 
If $L^P_c(x)$ is not almost constant, then there exists $c'\in\Q^d$ 
such that $L_{c'+P}(t)$ does not satisfy the $\GCD$-property. 
\end{proposition}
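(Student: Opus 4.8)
The plan is to exploit the hypothesis that $L^P_c(x)$ is not almost constant in order to locate, for some positive integer $t$, two residues $k,\ell$ with $\GCD(\rho_0,k)=\GCD(\rho_0,\ell)$ but $\#((kc+tP)\cap\Z^d)\neq\#((\ell c+tP)\cap\Z^d)$; by Corollary \ref{cor:consti} this says exactly that $f_k(t)\neq f_\ell(t)$ for the Ehrhart quasi-polynomial $L_{c+P}(t)$, and then $c'=c$ already destroys the $\GCD$-property (after possibly rescaling $t$, which is harmless since the $f_k$ are polynomials). Concretely, since $L^P_c$ is periodic of period $\rho_0$ and almost locally constant, ``not almost constant'' means there exist two integers $a<b$ in $\{0,1,\dots,\rho_0\}$ at which $L^P_c$ takes different (generic, i.e. interior-of-interval) values; I would first reduce to the case where $L^P_c$ is genuinely locally constant near those points, i.e. $(ac+\partial P)\cap\Z^d=(bc+\partial P)\cap\Z^d=\varnothing$, which can be arranged because the exceptional set is discrete and the value at a point equals the value on a neighbouring interval.

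The central difficulty is arithmetic: from ``the function takes two distinct values'' I need two residues with \emph{equal} $\GCD$ with $\rho_0$ realising distinct values. If $a$ and $b$ happen already to satisfy $\GCD(\rho_0,a)=\GCD(\rho_0,b)$ we are done. Otherwise I would argue as follows. Write $d_a=\GCD(\rho_0,a)$, $d_b=\GCD(\rho_0,b)$. The values of $L^P_c$ on the arithmetic progression $\{a+s\rho_0 : s\in\Z\}$, viewed as values of the quasi-polynomial, depend only on the residue $a$, so I may freely translate $a$ and $b$ by multiples of $\rho_0$; this gives room to adjust the $\GCD$. Using the kind of radical-based divisor manipulation already employed in the proof of Proposition \ref{prop:indgcd} (the $m=\rad(k)/\rad(\GCD(\tfrac{j}{i},k))$ trick), one can find an integer $m$ so that $\GCD(\rho_0,a)=\GCD(\rho_0,b+m\rho_0')$ for a suitable auxiliary modulus; but since here we are \emph{given} the target period $\rho_0$ and only want to move within it, a cleaner route is to instead pass to a larger period $N\rho_0$: by Proposition \ref{aiz}-free reasoning (just the definition of quasi-polynomial) $L_{c+P}$ also has period $N\rho_0$, and in $\Z/N\rho_0$ there is enough flexibility to choose lifts $\tilde a,\tilde b$ of $a,b$ with $\GCD(N\rho_0,\tilde a)=\GCD(N\rho_0,\tilde b)$ while keeping the underlying residues mod $\rho_0$ equal to $a,b$; then $f_{\tilde a}\neq f_{\tilde b}$ witnesses failure of the $\GCD$-property for the period $N\rho_0$, and by Proposition \ref{prop:indgcd} this is equivalent to its failure for $\rho_0$.

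So the key steps, in order, are: (1) unwind ``not almost constant'' into the existence of two integers $a<b$ in $[0,\rho_0]$ with $L^P_c(a)\neq L^P_c(b)$, and perturb so that neither $ac+\partial P$ nor $bc+\partial P$ meets $\Z^d$, ensuring these values equal $f_a(1)$ and $f_b(1)$ via Corollary \ref{cor:consti}; (2) if $\GCD(\rho_0,a)=\GCD(\rho_0,b)$, conclude immediately with $c'=c$; (3) otherwise, pick a suitable multiple $N$ and lifts $\tilde a\equiv a$, $\tilde b\equiv b\pmod{\rho_0}$ with $\GCD(N\rho_0,\tilde a)=\GCD(N\rho_0,\tilde b)$ (the number-theoretic heart, handled by a radical/prime-by-prime computation as in Proposition \ref{prop:indgcd}), so that $f_{\tilde a}(t)=L_{(P,\tilde a c)}(t)$ and $f_{\tilde b}(t)=L_{(P,\tilde b c)}(t)$ differ while $\GCD(N\rho_0,\tilde a)=\GCD(N\rho_0,\tilde b)$; (4) invoke Proposition \ref{prop:indgcd} to transfer the failure back to the period $\rho_0$, so that $L_{c'+P}$ with $c'=c$ does not satisfy the $\GCD$-property. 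The main obstacle is step (3): one must make the lifts realise equal $\GCD$s \emph{simultaneously} for both $a$ and $b$, and care is needed to verify that the residues modulo $\rho_0$ — hence the values of $L^P_c$ — are genuinely unchanged by the passage to modulus $N\rho_0$, which is where I expect the bookkeeping to be delicate.
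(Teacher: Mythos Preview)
There is a genuine gap in step (3). The claim that one can always choose $N$ and lifts $\tilde a\equiv a$, $\tilde b\equiv b\pmod{\rho_0}$ with $\GCD(N\rho_0,\tilde a)=\GCD(N\rho_0,\tilde b)$ is false. Take $\rho_0=6$, $a=2$, $b=3$: every lift $\tilde a=2+6k$ is even and not divisible by $3$, while every lift $\tilde b=3+6k$ is odd and divisible by $3$; hence for every $N$ the number $\GCD(6N,\tilde a)$ is even and $\GCD(6N,\tilde b)$ is odd, so they never coincide. More generally, for any prime $p\mid\rho_0$, if the exact power of $p$ dividing $a$ is strictly smaller than that dividing $\rho_0$, then adding multiples of $\rho_0$ cannot change the exact power of $p$ dividing the lift, so the $p$-part of $\GCD(N\rho_0,\tilde a)$ is frozen; whenever this exact power differs between $a$ and $b$, no lifting can equalize the two $\GCD$s. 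The radical trick from Proposition~\ref{prop:indgcd} goes in the opposite direction (it lifts one residue to match a \emph{given} divisor of the modulus, not to equalize two arbitrary residues), and Proposition~\ref{prop:indgcd} itself only transfers a failure between periods once you already have one. Insisting on $c'=c$ is therefore too rigid.

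Step (1) is also unjustified as stated: ``not almost constant'' gives two open sub-intervals of $[0,\rho_0)$ on which $L^P_c$ takes distinct constant values, but nothing forces either interval to contain an integer, and you cannot perturb $a,b$ off the integers while still speaking of the $a$-th and $b$-th constituents.

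The paper instead \emph{changes} $c$. After rescaling so that $c\in\Z^d$ (using $L^P_{kc}(x/k)=L^P_c(x)$), it studies $\delta(x)=L^P_c(x)-L^P_c(2x)$; a short doubling-plus-periodicity argument shows $\delta$ is not almost constant, so $\delta\neq 0$ on some open interval. Choosing a rational $x$ in that interval with \emph{odd} denominator and setting $c'=xc$ makes $\den(c')$ odd; the period $\rho'$ of $L_{c'+P}$ is then odd, so $\GCD(1,\rho')=\GCD(2,\rho')=1$ automatically, while the first and second constituents differ at $t=1$ because $L^P_c(x)\neq L^P_c(2x)$. The idea you are missing is precisely this rescaling to force an odd period, which trivializes the arithmetic rather than wrestling with it.
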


\begin{proof}
Since $L^P_{kc}(x/k)=L^P_c(x)$ for $k\in\Z_{>0}$, 
we may assume that $c\in\Z^d$. 
Consider the function $\delta(x):=L^P_c(x)-L^P_c(2x)$. 
Then $\delta(x)$ is 
clearly an almost locally constant function, which is $0$ near $x=0$. 
That is, there 
exists $\varepsilon>0$ such that $L^P_c(x)-L^P_c(2x)$ 
is 
constantly $0$ on the interval $(0, \varepsilon)$. 
We shall prove that 
$\delta(x)$ is not almost constant. Suppose the contrary. 
Then $\delta(x)=0$ except for a discrete set of points. 
By induction on $n>0$, we have $L^P_c(x)=L^P_c(2^nx)$ for 
almost all $x\in(0, \varepsilon)$. 
Since $L^P_c(x)$ is periodic, $L^P_c(x)$ is 
almost constant, which contradicts the assumption. 
Hence $\delta(x)$ is 
not almost constant, and there exists an interval 
$(a, b)\subset\R$ such that 
$\delta(x)\neq 0$ for all $x\in (a, b)$. Let $x\in (a, b)\cap \Q$ 
be a rational number such that $\den(xc)$ is odd. 
Then $L^P_c(x)\neq L^P_c(2x)$, 
which is equivalent to $L_{(P, xc)}(1)\neq L_{(P, 2xc)}(1)$. 

Consider the polytope $xc+P$. 
Since coordinates of $xc\in\Q^d$ have odd denominators, 
the minimal period $\rho_0$ is an odd integer. 
Therefore $\GCD(1, \rho_0)=\GCD(2, \rho_0)=1$. 
However, the argument above implies 
$L_{(P, xc)}(t)\neq L_{(P, 2xc)}(t)$. Hence the first and 
the second constituents of the Ehrhart quasi-polynomial 
$L_{xc+P}(t)$ are different, and $L_{xc+P}(t)$ does not 
have the $\GCD$-property. 
\end{proof}

\begin{example}
Let $P_1=\conv(\begin{pmatrix} 0 \\ 0 \end{pmatrix},\begin{pmatrix} 1 \\ 0 \end{pmatrix},\begin{pmatrix} 0 \\ 1 \end{pmatrix},\begin{pmatrix} 1 \\ 1 \end{pmatrix})\subset \R^2$ and $c_1=\begin{pmatrix}
\frac{1}{2} \\ \frac{1}{4}
\end{pmatrix}\in \Q^2$. 
Note that $P_1$ is a zonotope. Then,
\[L_{c_1}^{P_1}(x)=\begin{cases} 4 & \text{if } x\in 4\Z_{\ge 0}  \\ 
2 & \text{if } x\in 2+4\Z_{\ge 0}  \\
1 & \text{else} \end{cases}\] is almost constant, as illustrated in Figure \ref{fig:example1}. 

In contrast to $P_1$, consider the $3$-dimensional cross-polytope $P_2=\conv(\pm e_i|i=1,2,3)\subset \R^3$ and $c_2=\frac{1}{3}\begin{pmatrix}
1 \\ 1 \\ 1
\end{pmatrix}\in \Q^3$. 
We observe that $L_{c_2}^{P_2}$, see Figure \ref{fig:example2}, 
is not almost constant:
\[L_{c_2}^{P_2}(x)=\begin{cases} 7 & \text{if } x\in 3\Z_{\ge 0}  \\ 
1 & \text{if } x\in (k,k+1] \text{ for } k\in 3\Z_{\ge 0}  \text{ or } x\in [k-1,k) \text{ for } k\in 3\Z_{> 0}\\
0 & \text{else} \end{cases}.\] 
Therefore, we can find, for example, $x=\frac{3}{5}$ for that $L_{(xc+P_2)}$ does not satisfy the $\GCD$-property. 
More explicitly, the Ehrhart quasi-polynomial of the octahedron $P_2$ translated by $xc=(\frac{1}{5},\frac{1}{5},\frac{1}{5})^t$ is given by 
\[L_{xc+P_2}(n)=\begin{cases} 
		\frac{4}{3} n^3+2 n^2 + \frac{8}{3} n + 1 & \text{if } n \equiv 0 \mod 5 \\			
		\frac{4}{3} n^3- \frac{1}{3} n  & \text{if } n \equiv 1 \mod 5 \\ 
		\frac{4}{3} n^3- \frac{4}{3} n  & \text{if } n \equiv 2 \mod 5 \\
		\frac{4}{3} n^3-  \frac{4}{3} n  & \text{if } n \equiv 3 \mod 5 \\
		\frac{4}{3} n^3-  \frac{1}{3} n  & \text{if } n \equiv 4 \mod 5 \\ \end{cases}, \]
for which the first and second constituents are different, but $\GCD (1,5)=\GCD (2,5)$.
\end{example}

\begin{figure}[htbp]
\centering
\begin{subfigure}[t]{0.45\textwidth}
\centering
\begin{tikzpicture}[scale=1.2]

\draw[->] (-0.5,0)--(4.25,0);
\draw[->] (0,-0.5)--(0,4.25);
\filldraw[black] (0,4) circle (2pt);
\filldraw[black] (1,2) circle (2pt);
\filldraw[black] (2,4) circle (2pt);
\filldraw[black] (3,2) circle (2pt);
\filldraw[black] (4,4) circle (2pt);
\draw[line width=2.5pt] (0,1)--(4,1);
\filldraw[fill=white, draw=black] (0,1) circle (2pt) ;
\filldraw[fill=white, draw=black] (1,1) circle (2pt) ;
\filldraw[fill=white, draw=black] (2,1) circle (2pt) ;
\filldraw[fill=white, draw=black] (3,1) circle (2pt) ;
\filldraw[fill=white, draw=black] (4,1) circle (2pt) ;

\draw (0,1) node[left]{$1$};
\draw (0,2) node[left]{$2$};
\draw (0,3) node[left]{$3$};
\draw (0,4) node[left]{$4$};

\draw (1,0) node[below]{$2$};
\draw (2,0) node[below]{$4$};
\draw (3,0) node[below]{$6$};
\draw (4,0) node[below]{$8$};

\end{tikzpicture}
\caption{$L^{P_1}_{c_1}$}
\label{fig:example1}
\end{subfigure}
\begin{subfigure}[t]{0.45\textwidth}
\centering
\begin{tikzpicture}[scale=1.2]

\filldraw[black] (0,4) circle (2pt);
\filldraw[black] (3,4) circle (2pt);

\draw[->] (-0.5,0)--(4.25,0);
\draw[->] (0,-0.5)--(0,4.25);

\draw [line width=2.5pt] (0,0.57)--(1,0.57);
\draw [line width=2.5pt] (1,0)--(2,0);
\draw [line width=2.5pt] (2,0.57)--(4,0.57);
\draw [line width=2.5pt] (4,0)--(4.2,0);

\filldraw[fill=white, draw=black] (0,0.57) circle (2pt) ;
\filldraw[fill=white, draw=black] (3,0.57) circle (2pt) ;

\draw (0,0.57) node[left]{$1$};
\draw (0,1.14) node[left]{$2$};
\draw (0,1.71) node[left]{$3$};
\draw (0,2.28) node[left]{$4$};
\draw (0,2.85) node[left]{$5$};
\draw (0,3.42) node[left]{$6$};
\draw (0,4) node[left]{$7$};

\draw (1,0) node[below]{$1$};
\draw (2,0) node[below]{$2$};
\draw (3,0) node[below]{$3$};
\draw (4,0) node[below]{$4$};

\filldraw[black] (1,0.57) circle (2pt);
\filldraw[black] (2,0.57) circle (2pt);
\filldraw[black] (4,0.57) circle (2pt);

\filldraw[fill=white, draw=black] (1,0) circle (2pt) ;
\filldraw[fill=white, draw=black] (2,0) circle (2pt) ;
\filldraw[fill=white, draw=black] (4,0) circle (2pt) ;

\end{tikzpicture}
\caption{$L^{P_2}_{c_2}$}
\label{fig:example2}
\end{subfigure}
\caption{Examples almost locally constant functions}
\end{figure}
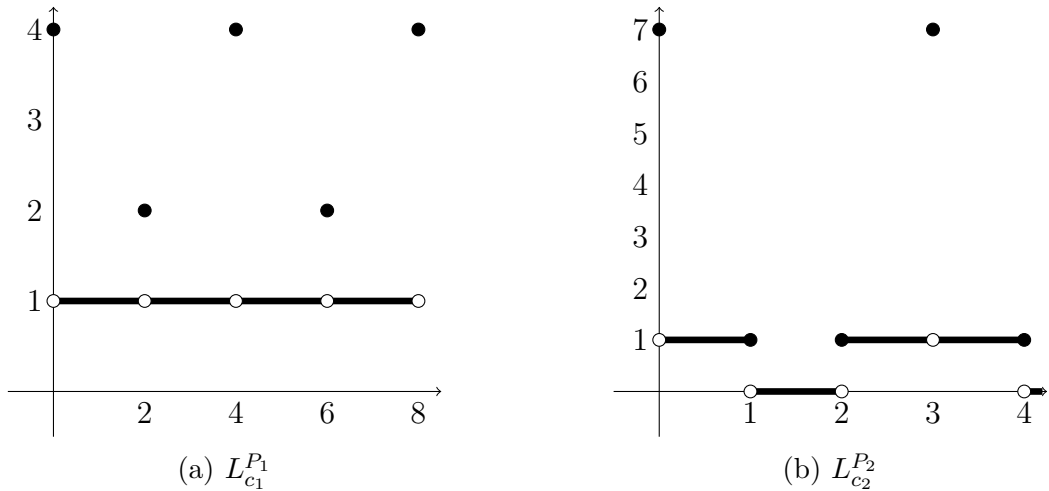

We will use the following characterization of a zonotope. 
\begin{proposition}\cite{mc}
\label{gov}
Let $P\subset R^d$ be a polytope of dimension $m\le d$. 
Then, $P$ is a zonotope if and only if 
all faces of dimension $j$ are centrally symmetric for some 
$2\le j \le m-2$.
\end{proposition}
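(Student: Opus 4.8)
The plan is to prove the two implications separately. For ``only if'', if $P=\zono(u_1,\dots,u_n)$ then every face of $P$ is, up to translation, of the form $\zono(u_i:i\in S)$ for a suitable $S\subseteq\{1,\dots,n\}$, hence a zonotope, hence centrally symmetric; so all faces of $P$ of all dimensions are centrally symmetric, in particular those of dimension $j$. (The equivalence is vacuous unless $m\ge4$, which I will assume.)

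For ``if'', assume all $j$-faces of $P$ are centrally symmetric for some fixed $j$ with $2\le j\le m-2$. I would first reduce to proving that all $2$-faces of $P$ are centrally symmetric, and then invoke the classical theorem that a polytope all of whose $2$-faces are centrally symmetric is a zonotope. If $j=2$ there is nothing to reduce. If $j\ge3$, I would run a descending induction on $j$, whose single step is: \emph{if $3\le j\le m-2$ and all $j$-faces of an $m$-polytope are centrally symmetric, then all $(j-1)$-faces are centrally symmetric.} The bound $j\le m-2$ is essential and cannot be relaxed to $j\le m-1$: the $24$-cell is a centrally symmetric $4$-polytope whose facets, being octahedra, are centrally symmetric, yet which is not a zonotope (its $2$-faces are triangles).

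The descent step is, I expect, the hard part. To carry it out I would fix a $(j-1)$-face $G$ of $P$ and pass to the quotient polytope $P/G$, whose face lattice is that of the faces of $P$ containing $G$; it has dimension $m-j$, and since $j\le m-2$ this is at least $2$, so the $j$-faces of $P$ that contain $G$ --- which are precisely the vertices of $P/G$ --- are tied together in a $2$-connected way, the edges of $P/G$ being the $(j+1)$-faces of $P$ containing $G$ and its $2$-faces the $(j+2)$-faces containing $G$. Each such $j$-face $F$ is centrally symmetric with $G$ as a facet. The idea is to propagate the central symmetry of the $F$'s around the cells of $P/G$ --- comparing adjacent $F$'s inside the $(j+1)$-face they share and then closing up around a $2$-face of $P/G$ --- and to conclude, via Minkowski's reconstruction theorem (a polytope is determined up to translation by its facet normals and facet volumes; this is the same input used for Lemma~\ref{facets}, cf.~\cite{min}), that $G$ has the same facet data as a translate of $-G$, hence is centrally symmetric. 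Making the loop-closing precise is the delicate point, and is in substance the content of \cite{mc}.

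For completeness I would include a proof of the classical $j=2$ case by the \emph{belt} (zone) argument: call two edges of $P$ equivalent if joined by a chain of edges in which consecutive members are opposite sides of a common $2$-face (which, being centrally symmetric, has its opposite sides parallel); each equivalence class is a belt with a well-defined direction $u$, the belt cuts $\partial P$ into two pieces differing by the translation $u$, so that $P$ is, up to translation, $Q+[0,u]$ for a polytope $Q$ with one fewer belt whose $2$-faces are again all centrally symmetric. Induction on the number of belts then displays $P$ as a Minkowski sum of segments, i.e.\ a zonotope; together with the reduction above this proves ``if''.
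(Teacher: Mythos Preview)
The paper does not give its own proof of this proposition; it is quoted from McMullen's short note \cite{mc} and used as a black box in the proof of Theorem~\ref{charzono} (and in fact only the contrapositive at $j=m-2$ is invoked there). So there is no in-paper argument to compare your attempt against.

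That said, your plan is a sound outline of how the result is established. The forward direction is the standard observation that every face of a zonotope is, up to translation, a sub-zonotope and hence centrally symmetric. For the converse, reducing to the classical case $j=2$ (all $2$-faces centrally symmetric implies zonotope, via the belt argument you sketch) by a descent $j\mapsto j-1$ is the right architecture; the hypothesis $j\le m-2$ is precisely what makes the quotient $P/G$ at least two-dimensional so that one can ``walk around'' $G$ through adjacent $j$-faces, and your $24$-cell remark correctly explains why $j=m-1$ must be excluded. You are also candid that the loop-closing in the descent step---showing that propagation through the $2$-cells of $P/G$ forces the facet data of $G$ and $-G$ to agree, whence Minkowski's theorem gives $G\cong -G$---is the delicate point and is in substance the content of \cite{mc}. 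As written, then, your proposal is a correct proof plan that defers the key step to the reference rather than a self-contained argument; this is entirely appropriate here, since the paper itself does the same.
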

The second main result of this paper is the following. 
\begin{theorem}
\label{charzono}
Let $P\subset\R^d$ be a lattice polytope. Then the following are equivalent
\begin{itemize}
\item[$(i)$] $P$ is a zonotope. 
\item[$(ii)$] $L_{c+P}(t)$ satisfies the $\GCD$-property for every $c\in\Q^d$. 
\end{itemize}
\end{theorem}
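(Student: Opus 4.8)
The implication $(i)\Rightarrow(ii)$ is immediate from Theorem \ref{zono1}: if $P$ is a lattice zonotope and $c\in\Q^d$, then $c+P$ is an almost integral zonotope, so $L_{c+P}(t)$ has the $\GCD$-property. Thus the real content is $(ii)\Rightarrow(i)$, which I plan to prove in contrapositive form: if $P$ is not a zonotope, then some $L_{c+P}(t)$ fails the $\GCD$-property. Since the $\GCD$-property of $L_{c+P}$ and the functions $L_{(P,c)}$ and $L^P_c$ depend on $P$ only through $\aff(P)\cap\Z^d$, which is a coset of a rank-$(\dim P)$ sublattice, I would first translate $P$ by a vertex and choose a $\Z$-basis of that sublattice so as to reduce to the case $d=\dim P$. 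If $P$ is not centrally symmetric, Theorem \ref{charsym} produces $c\in\Q^d$ with $L_{c+P}(t)$ not symmetric; since the $\GCD$-property implies symmetry, $L_{c+P}(t)$ then fails the $\GCD$-property. So the remaining (and essential) case is that $P$ is centrally symmetric but not a zonotope.

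In that case, McMullen's characterization (Proposition \ref{gov}, supplemented by the easy low-dimensional facts that a centrally symmetric polygon is a zonotope and a $3$-polytope is a zonotope precisely when all its facets are centrally symmetric) yields a face $G$ of $P$ with $2\le\dim G\le d-1$ that is \emph{not} centrally symmetric. Because $P$ is centrally symmetric, the image $G^{\op}$ of $G$ under the central symmetry of $P$ is again a face of $P$; it is a translate of $-G$, and since $G$ is not centrally symmetric, $G^{\op}$ is not a translate of $G$. My target is to exhibit $c\in\Q^d$ for which the periodic, almost locally constant function $L^P_c(x)=\#\bigl((xc+P)\cap\Z^d\bigr)$ is \emph{not} almost constant: Proposition \ref{nongcd} will then supply a rational vector $c'$ with odd denominators such that $L_{(P,c')}(t)\neq L_{(P,2c')}(t)$, so that the first two constituents of $L_{c'+P}(t)$ differ although $\GCD(1,\den(c'))=\GCD(2,\den(c'))=1$, and the $\GCD$-property fails.

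I would build such a $c$ in two steps. First, the engine: a non-centrally-symmetric lattice polytope $G$ admits a rational direction $c_G\in\aff_0(G)$ for which $L^G_{c_G}$ is not almost constant. Here $G$ is full-dimensional in $\aff(G)$, so by Lemma \ref{facets} it has a facet $F$ whose opposite facet $F^{\op}$ (possibly empty) has a different relative volume; sliding $G$ generically along $\aff_0(F)$ makes lattice points leave through the "$F^{\op}$ side" at a different rate than they enter through the "$F$ side", which forces $L^G_{c_G}$ through two distinct plateau values — exactly the behaviour of $L^{P_2}_{c_2}$ in the example following Proposition \ref{nongcd}. Second, the lifting: write $G=\{x\in P:\phi(x)=\max_P\phi\}$ for a linear functional $\phi$, lift $c_G$ to $\widetilde{c_G}\in\aff_0(P)$, and put $c=\widetilde{c_G}+\varepsilon w\in\Q^d$ with $\phi(w)>0$, $w\in\Q^d$ generic, and $\varepsilon\in\Q_{>0}$ small. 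For generic $w$ and $\varepsilon$, the "new points / lost points" decomposition from the proof of Theorem \ref{tlpe} isolates, on suitable subintervals of a period, a contribution to $L^P_c(x)$ equal to a fixed constant plus a quantity governed by $L^G_{c_G}$, and on other subintervals a constant plus the analogous quantity for $G^{\op}$. Since $G^{\op}$ is not a translate of $G$, these two pieces cannot both be almost constant in a compatible way, so $L^P_c$ is not almost constant, as required.

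The main obstacle is the engine step together with the genericity bookkeeping in the lifting: one must verify that the contributions of $G$ and of $G^{\op}$ genuinely fail to cancel — they would cancel precisely if $G$ were centrally symmetric — while ensuring that no other face of $P$ contributes lattice points at non-isolated values of $x$ on the relevant subintervals. This is where Minkowski's and McMullen's characterizations do the work, and it is the "involved argument" alluded to in the introduction. By contrast, the reduction to the non-centrally-symmetric and low-dimensional cases and the final appeal to Proposition \ref{nongcd} should be routine.
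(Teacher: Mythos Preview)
Your high-level architecture matches the paper's: both prove the contrapositive of $(ii)\Rightarrow(i)$ by exhibiting a direction $c$ for which $L^P_c$ is not almost constant and then invoking Proposition~\ref{nongcd}, and both dispose of the non-centrally-symmetric case via Theorem~\ref{charsym}. The paper, however, does not attempt a uniform ``engine plus lifting'' argument for an arbitrary non-symmetric face $G$. It splits the centrally symmetric case in two: (B) some facet $F$ is non-symmetric, and (C) all facets are symmetric but some $(m-2)$-face $G$ is not. Case~(B) is close to what you describe (a generic $c'\in\aff_0(F)$ plus an integral tilt $c''$ making $F$ upper and $F^{\op}$ lower). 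Case~(C) is where your plan has a genuine gap.

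The sentence ``ensuring that no other face of $P$ contributes lattice points at non-isolated values of $x$'' is exactly what fails when $G$ has codimension at least $2$ and all facets are symmetric. In that situation $G=G_0$ sits in a \emph{belt} of facets $F_1,\dots,F_n$ with intermediate $(m-2)$-faces $G_1,\dots,G_{n-1}$, each $G_i$ a translate of $G$ or of $-G$; in particular $\aff_0(G_i)=\aff_0(G)$ for every $i$, so \emph{no} genericity in $\aff_0(G)$ can kill their contribution. Your lifting would therefore pick up all of $G_0,\dots,G_{n-1}$, and the contributions from the $+G$-translates and the $-G$-translates could in principle cancel. The paper resolves this by (a) deliberately choosing $G$ of codimension exactly $2$, so that projecting onto $\aff_0(G)^\perp\cong\R^2$ turns the belt into the vertices and edges of an $n$-gon; (b) observing that since $G$ is not centrally symmetric, $n$ is even with $n/2$ \emph{odd}; and (c) choosing the small perturbation $v$ in $\aff_0(F_1)=\aff_0(F_{n/2+1})$, which makes $F_2,\dots,F_{n/2}$ upper and $F_{n/2+2},\dots,F_n$ lower. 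A direct count then shows that among the upper $G_i$'s there are $\tfrac{n+2}{4}$ translates of $G$ and $\tfrac{n-2}{4}$ of $-G$, while among the lower $G_i$'s the counts are reversed; since $\#((c+G)\cap\Z^d)\neq\#((-c+G)\cap\Z^d)$, the new and lost lattice-point totals at $x=1$ differ, so $L^P_{c+v}$ is not almost constant. This parity argument is the ``involved'' part you allude to but do not supply, and without it the lifting step does not go through.
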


\begin{proof}
The implication 
$(i) \Rightarrow (ii)$ was proved in Theorem \ref{zono1}. 
Now, let $P\subset \R^d$ be an $m$-polytope that is not a 
zonotope. We aim to prove the existence of $c\in\Q^d$ 
such that the Ehrhart quasi-polynomial $L_{c+P}(t)$ does not 
satisfy the $\GCD$-property. 

We can categorize polytopes that are not zonotopes into 
three types: 
\begin{itemize}
\item[$(a)$] 
non-centrally symmetric polytopes, 
\item[$(b)$] 
centrally symmetric polytopes with at least one non-centrally 
symmetric facet, and 
\item[$(c)$] 
centrally symmetric polytopes whose facets are all rentrally symmetric. 
\end{itemize}

First, let $P$ be a non-centrally symmetric polytope. 
As mentioned earlier, this case has been addressed in 
Theorem \ref{charsym}.

Second, suppose $P$ is a centrally symmetric polytope 
with at least one facet $F$ that is not centrally symmetric. 
Let $F\subset P$ be a non-symmetric facet with its opposite 
facet $F^{\op}$. 
By Corollary \ref{openness}, there exists an open subset 
$U\subset\aff_0(F)$ 
such that for every $c'\in U\cap\Q^d$, 
there exists $t\in\Z_{>0}$ such that 
$L_{(F, c')}(t)\neq L_{(F, -c')}(t)(=L_{(F^{\op}, c')}(t))$. 
Now choose $c'\in U\cap\Q^d$ with odd denominators and 
generic enough so that we may assume that 
$(c'+\aff(X))\cap\Z^d=\varnothing$ for all faces 
$X\neq F, F^{\op}$ of $P$. 
Let $c''\in\Z^d$ be an integral vector such that 
$F$ is an upper face and 
$F^{\op}$ is a lower face. Consider $c=c'+c''$. 
Then, $L^{tP}_{c}(x)$ is not almost constant. Because, at $x=1$, 
the number of lattice points in the upper faces $\#((c+tF)\cap\Z^d)$ 
and that of the lower faces $\#((c+tF^{\op})\cap\Z^d)$ are different. 
Hence the function $L^{tP}_{c}(x)$ takes different values on 
the interval $x\in (1-\varepsilon, 1)$ and $x\in(1, 1+\varepsilon)$. 
Thus, by Proposition \ref{nongcd}, 
the Ehrhart quasi-polynomial $L_{c+P}(t)$ does not satisfy 
the $\GCD$-property. 

Finally, consider a centrally symmetric polytope $P$ whose 
facets are also centrally symmetric, but that is not a zonotope. 
By using Proposition \ref{gov}, there exists a face 
$G$ of dimension $(m-2)$ that is not centrally symmetric. 
Let $F_0$ and $F_1$ be facets of $P$ such that 
$G_0:=G=F_0\cap F_1$. Let $G_1$ be the opposite 
facet of $G_0$ in $F_1$. 
Since $F_1$ is centrally symmetric, $G_1$ is a translation of 
$G^{\op}$ which itself is a translate of $-G$. In particular, 
$G_1$ is parallel to $G_0$. 
Continue like this and let $F_i$ be the facet of $P$ 
such that $G_{i-1}=F_{i-1}\cap F_i$ and let $G_i$ be 
the opposite face of $G_{i-1}$ in $F_i$. 
Observe that by this construction, $G_i$ is a translation of $G$ 
(resp. $G^{\op}$), if $i$ is even (resp. odd). This process terminates 
at some $n\in \N$. Then $n$ is even. 
Because, if $n$ is odd, $G$ would be a translation of $-G$, 
which contradicts the assumption. 
Furthermore, since $P$ is centrally symmetric, 
$G_\frac{n}{2}$ is opposite of $G$ and thus, 
a translation of $-G$. Hence, $\frac{n}{2}$ is odd.

Let $\pi:\R^d\to \R^2$ be the orthogonal projection to 
the orthogonal complement $\aff(G)^{\perp}$ of $\aff(G)$. 
Then, the image $\pi(P)$ is an $n$-gon with vertices 
$v_1,\ldots ,v_n (=v_0)$ and edges $u_1,\ldots , u_n$. 
We choose the numbering so that 
$G_i=\pi^{-1}(v_i)\cap P$ and $F_i=\pi^{-1}(u_i)\cap P$. 

We claim that $\aff_0(G)$ is transversal to $\aff_0(F)$ 
for all facets $F\neq F_i$, $i=1,\ldots ,n$. 
Since $F$ is of codimension $1$, 
we have $\aff_0(G)+\aff_0(F)$ is either $\aff_0(F)$ or 
$\aff_0(G)+\aff_0(F)=\aff_0(P)$. 
The latter case means that $\aff_0(G)$ and $\aff_0(F)$ 
are transversal. The former case is equivalent to 
$\aff_0(G)\subseteq \aff_0(F)$. Then the image 
$\pi(F)\subset \pi(P)$ is $1$-dimensional. 
However, since $F$ does not separate $P$, 
$\pi(F)$ does not separate $\pi(P)$. Thus, $\pi(F)=u_i$ 
for some $i\in \{1,\ldots, n\}$ and $F=F_i$.

By Corollary \ref{openness}, 
we are able to choose a sufficiently small 
rational vector $c\in \aff_0(G)\cap\Q^d$ 
such that $\#((c+G)\cap \Z^d)\neq \#((-c+G)\cap \Z^d)$ and 
$(c+(\partial P \backslash (F_1\cup F_2 \cup \ldots \cup F_n)))
\cap \Z^d=\varnothing$. The latter is possible since $\aff_0(G)$ 
is transversal to $\aff_0(F)$ for all facets $F\neq F_i$, $i=1,\ldots ,n$. 

\begin{figure}[htbp]
\centering
\begin{tikzpicture}[scale=1.2]

\draw (-0.7,0) node[below]{$v_0$}
	--(0.7,0) node[below]{$v_1$}
	--(1.8,0.8) node[below right]{$v_2$}
	--(2.3,2) node[right]{$v_3$}
	--(1.8,3.2) node[above right]{$v_4$}
	--(0.7,4) node[above]{$v_5$}
	--(-0.7,4) node[above]{$v_6$}
	--(-1.8,3.2) node[above left]{$v_7$}
	--(-2.3,2) node[left]{$v_8$}
	--(-1.8,0.8) node[below left]{$v_9$} --cycle;

\draw [->](0.7,0)--(1.7,0) node[right]{$v$};
\end{tikzpicture}
\caption{projection ($n=10$)}
\label{fig:example}
\end{figure}
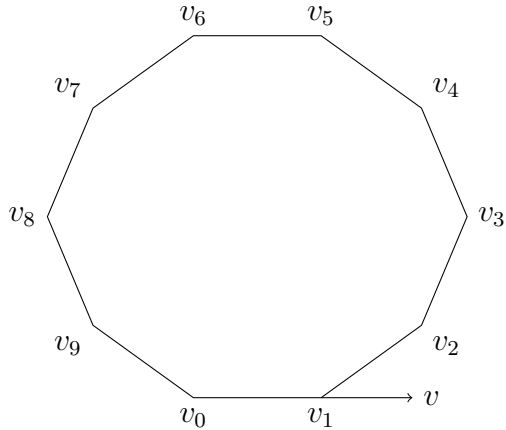

Lastly, take a vector 
$v\in \aff_0(F_1)\cap\Z^d=\aff_0(F_{\frac{n}{2}+1})\cap\Z^d$ 
such that $F_2,\ldots, F_{\frac{n}{2}}$ are upper faces, and 
$F_{\frac{n}{2}+2},\ldots, F_{n}$ are lower faces with respect to $v$. 
Now, we count the number of lattice points lost and newly obtained 
by translating $P$ by $c'=c+v$. 
The opposite face of $F_i$ is $F_{i+\frac{n}{2}}$, 
where the indices are considered modulo $n$. 
In addition, $F_{i+\frac{n}{2}}$ is a translation of $-F$ 
which itself is a translation of $F$, since all facets are symmetric. 
In order to simplify notation, let $(X)_{\Z}$ denote 
$X\cap \Z^d$ for a set $X\subset \R^d$. 
Since $\frac{n}{2}$ is odd, the number of lattice points on 
the upper faces of $P$ with respect to $c'$ is 

\[\begin{aligned} & \interior (c+F_2)_{\Z}+\ldots +\interior (c+F_{\frac{n}{2}})_{\Z}+(c+G_1)_{\Z}+\ldots + (c+G_{\frac{n}{2}})\\
&= \sum_{i=2}^{\frac{n}{2}}  \interior (c+F_i)_{\Z}+ \frac{n+2}{4}(-c+G)_{\Z}+ \frac{n-2}{4}(c+G)_{\Z}.\end{aligned}\]

Whereas the number of lattice points on the lower faces is 

\[\begin{aligned} & \interior (c+F_{\frac{n}{2}+2})_{\Z}+\ldots +\interior (c+F_n)_{\Z}+(c+G_{\frac{n}{2}+1})_{\Z}+\ldots + (c+G_n)\\
&= \sum_{i=2}^{\frac{n}{2}}  \interior (c+F_{i+\frac{n}{2}})_{\Z}+ \frac{n-2}{4}(-c+G)_{\Z}+ \frac{n+2}{4}(c+G)_{\Z}.\end{aligned}\]

From $\#(c+G)\cap \Z^d\neq \#(-c+G)\cap \Z^d$, 
it follows that these numbers are different. 
Therefore, we have 
$\#\left((xc'+P)\cap \Z^d\right)\neq \#\left((x'c'+P)\cap \Z^d\right)$ for $x\in (1-\epsilon,1), x'\in (1, 1+\epsilon)$ with $\epsilon \in \R$ small enough. Thus, $L_{c'}^P$ is not almost constant. By 
Proposition \ref{nongcd}, the Ehrhart quasi-polynomial 
$L_{c'+P}(t)$ does not satisfy the $\GCD$-property. 
\end{proof}

\section{Discussions and further problems}

\label{discuss}

\subsection{Minimal periods}

\label{sec:min}

In Ehrhart theory, the minimal period of an Ehrhart quasi-polynomial 
sometimes becomes strictly smaller than the LCM of denominators 
of vertices. This is known as the 
\emph{period collapse} phenomenon. 
For almost integral polytopes, it is natural to ask the following. 

\begin{problem}
\label{minpbm}
Let $P$ be a lattice polytope in $\R^d$ and $c\in\Q^d$. Is the 
minimal period of $L_{c+P}(t)$ equal to $\den(c)$? 
\end{problem}

As a partial result, we have the following for zonotopes. 

\begin{proposition}
\label{zonmax}
Let $P\subset\R^d$ be a lattice zonotope and $c\in\Q^d$. 
\begin{itemize}
\item[(1)] 
The minimal period of $L_{c+P}(t)$ is $\den(c)$. 
\item[(2)] 
The inequality 
\begin{equation}
\label{eq:ineq}
\#((c+P)\cap\Z^d)\leq \#(P\cap\Z^d) 
\end{equation}
holds. Furthermore, in (\ref{eq:ineq}), the equality holds if and only if $c\in\Z^d$. 
\end{itemize}
\end{proposition}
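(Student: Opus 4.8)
The plan is to deduce both parts from the Ardila--Beck--McWhirter formula (Proposition \ref{aiz}), obtaining (1) as a formal consequence of (2). Since $P$ is a lattice zonotope, after translating it by a lattice vector (which affects neither the minimal period nor either side of (\ref{eq:ineq})) I may assume $P=\zono(U)$ for a finite set $U\subseteq\Z^d$. The only property of the summands of (\ref{k-const}) that I will need is that $\relvol(\zono(W))>0$ for \emph{every} linearly independent $W\subseteq U$, the empty set included: $\zono(\varnothing)=\{0\}$ and $\relvol(\{0\})=1$, while for nonempty linearly independent $W=\{u_1,\dots,u_k\}$ the number $\relvol(\zono(W))$ is the positive integer $[(\Span W)\cap\Z^d:\Z u_1+\cdots+\Z u_k]$.

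\textbf{Part (2).} If $c\in\Z^d$, equality in (\ref{eq:ineq}) is obvious, so I assume $\rho_0:=\den(c)\geq 2$. By (\ref{k-const}) the $k$-th constituent of $L_{c+P}(t)$ is $f_k(t)=\sum_W\chi_W(k)\,\relvol(\zono(W))\,t^{|W|}$. Since $0\in\aff(W)$ one has $\chi_W(0)=1$ for every $W$, so $f_0(t)=\sum_W\relvol(\zono(W))\,t^{|W|}=L_P(t)$, and therefore
\[
f_0(t)-f_1(t)=\sum_{\substack{W\subseteq U \text{ lin. indep.}\\ \chi_W(1)=0}}\relvol(\zono(W))\,t^{|W|}
\]
is a polynomial with nonnegative coefficients. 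Evaluating at $t=1$ and invoking Corollary \ref{cor:consti} (so that $f_0(1)=\#(P\cap\Z^d)$ and $f_1(1)=\#((c+P)\cap\Z^d)$) yields (\ref{eq:ineq}). If equality holds at $t=1$, every summand must vanish, i.e.\ $\chi_W(1)=1$ for all linearly independent $W$; taking $W=\varnothing$, the condition $\chi_\varnothing(1)=1$ means precisely $c\in\Z^d$, since $\aff(\varnothing)=\{0\}$. Conversely, $c\in\Z^d$ gives equality trivially.

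\textbf{Part (1).} Let $\rho$ be the minimal period of $L_{c+P}(t)$; since $\den(c)$ is a period, $\rho\mid\den(c)$. Because $\rho$ is itself a period, the $\rho$-th constituent of $L_{c+P}(t)$ with respect to the period $\den(c)$ coincides with the $0$-th one, so Corollary \ref{cor:consti} gives $L_{(P,\rho c)}(t)=L_{(P,0)}(t)=L_P(t)$; at $t=1$ this reads $\#((\rho c+P)\cap\Z^d)=\#(P\cap\Z^d)$. Applying part (2) to the translation vector $\rho c\in\Q^d$, this equality forces $\rho c\in\Z^d$, i.e.\ $\den(c)\mid\rho$, and hence $\rho=\den(c)$.

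\textbf{Where the difficulty lies.} Once Proposition \ref{aiz} is available the argument is essentially bookkeeping; there is no deep step. The one point that needs genuine care is the equality clause in (2): one must notice that the empty subset is a legitimate summand of (\ref{k-const}) carrying $\relvol(\zono(\varnothing))=1$, and that its indicator $\chi_\varnothing(t)$ is supported exactly on the multiples of $\den(c)$, so that this single term already makes $L_P$ strictly exceed $L_{(P,c)}$ as soon as $c\notin\Z^d$. I would also check the degenerate cases $\den(c)=1$ and $\dim P=0$ separately, and be careful about the identification of constituents when passing between the periods $\rho$ and $\den(c)$ in the proof of (1).
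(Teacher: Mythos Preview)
Your proof is correct and uses essentially the same ingredient as the paper's proof: the Ardila--Beck--McWhirter formula and, in particular, the $W=\varnothing$ summand. The only difference is the order: the paper observes directly that the constant term of the $k$-th constituent is $\chi_\varnothing(k)$, which equals $1$ if and only if $\den(c)\mid k$, and this immediately gives (1); it then remarks that (2) is the case $t=1$ of this comparison. You instead prove (2) first and deduce (1) from it via the equality clause applied to $\rho c$. Both arguments are short and rest on the same observation, so this is a minor reorganisation rather than a different route.
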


\begin{proof}
Consider the term $W=\varnothing$ in 
Ardila-Beck-McWhirtner's formula (Proposition \ref{aiz}). Then 
$\chi_W(t)=1$ if and only if $tc\in\Z^d$, which is equivalent to $t$ is 
divisible by $\den(c)$. This yields (1). 

(2) is the special case $t=1$. 
\end{proof}
One of the difficulties of Problem \ref{minpbm} is that Proposition \ref{zonmax} (2) 
does not hold for general polytopes. The next example shows that 
$c+P$ can contain more lattice points than the original lattice polytope $P$. 
It seems of interest in its own right. 

\begin{example}
\label{counterex}
Let $n>7$. Define the lattice polytope $P_n$ in $\R^3$ by 
\begin{equation}
\conv\{ 0, ne_1, ne_2, n(e_1+e_2), e_3, ne_2+e_3, (1-n)e_3\}, 
\end{equation}
where $e_1, e_2, e_3$ are the standard basis of $\R^3$. 
For $0<k<n$, let $c_k=\frac{k}{n}e_3$. 
Then straightforward computation shows 
\begin{equation}
|P_n\cap\Z^3|=\frac{2n^3+3n^2+19n+12}{6}, 
\end{equation}
and we have 
\begin{equation}
\alpha(n, k):=|(c_k+P_n)\cap\Z^3|-|P_n\cap\Z^3|=
k(n+1)-k^2-2n-1, 
\end{equation}
which becomes positive for some $k$, e.g., $k=3$. If $n$ is odd, $\alpha(n, k)$ attains 
maximum at $k=\frac{n+1}{2}$ and 
\begin{equation}
\label{eq:upperbd}
\alpha(n, \frac{n+1}{2}):=\frac{n^2-6n-3}{4}. 
\end{equation}
\end{example}

\begin{problem}
For which lattice $d$-polytope $P$ and $c\in\Q^d$ does the inequality 
\begin{equation}
|P\cap\Z^d|<|(c+P)\cap\Z^d|
\end{equation}
hold? For a lattice polytope $P$, what is $\max\{|(c+P)\cap\Z^d|  
c\in\Q^d\}$? 
\end{problem}

\begin{problem}
Do there exist certain constants $\kappa, \lambda>0$ 
such that the following holds for every $P$ and $c$? 
\begin{equation}
|(c+P)\cap\Z^d|\leq |P\cap\Z^d|+\kappa |P\cap\Z^d|^\lambda
\end{equation}
(The formula (\ref{eq:upperbd}) in 
Example \ref{counterex} may suggest that $\lambda\geq\frac{d-1}{d}$.) 
\end{problem}


Let us fix a lattice polytope $P$ in $\R^d$. Then the translation $P+c$ ($c\in\Q^d$) 
defines infinitely many rational polytopes. It seems natural to ask what kind of 
polynomials appear as constituents of the Ehrhart quasi-polynomial 
$L_{P+c}(t)$ of a translation $P+c$ $(c\in\Q^d)$. We pose the following. 

\begin{problem}
\label{Stanley}
Let $P\subset \R^d$ be a lattice polytope. 
We can define a map (see Theorem \ref{tlpe}) 
\[
\begin{aligned}
\Gamma_P: 
(\R/\Z)^d &\longrightarrow  \Q[t]\\
 c &\longmapsto L_{(P, c)}(t). 
\end{aligned}
\] 
What can we say about the map? For example, is the image a finite set? 
(Note that the finiteness of the image implies that the set of all constituents 
of the Ehrhart quasi-polynomials of all possible translations $P+c$ is finite.) 
\end{problem}

\subsection{Rational polytopes with the $\GCD$-property and hyperplane arrangements}

\label{sec:hyp}

In our main results, 
Theorem \ref{charsym} and \ref{charzono}, 
we assumed that $P$ is an almost integral polytope. 
The authors are uncertain whether these assumptions are 
necessary or not. Thus we pose the following question. 

\begin{problem}
Let $P\subset\R^d$ be a rational polytope. 
\begin{itemize}
\item[(1)] 
If $L_{c+P}(t)$ is symmetric for all $c\in\Q^d$, 
is $P$ a centrally symmetric (almost integral) polytope? 
\item[(2)] 
If $L_{c+P}(t)$ satisfies the $\GCD$-property for all $c\in\Q^d$, 
is $P$ a (almost integral) zonotope? 
\end{itemize}
\end{problem}

It is worth noting that 
almost integral zonotopes are not the only examples of polytopes 
whose Ehrhart quasi-polynomials satisfy the $\GCD$-property. 
For instance, some interesting rational polytopes (simplices) 
have this property. 
For example, Suter \cite{sut} observed that 
the fundamental alcove (a certain rational simplex) 
of a root system has an Ehrhart quasi-polynomial 
with the $\GCD$-property. 
The following are examples corresponding to 
the root systems of type $E_6, E_7, E_8, F_4$ and $G_2$. 

\begin{example}
\label{alcove}
Let $e_1, \dots, e_\ell$ be the standard basis of $\R^\ell$. 
\begin{itemize}
\item[(1)] 
Consider the $6$-dimensional rational simplex 
$P_{E_6}\subset\R^6$ defined by 
\begin{equation}
P_{E_6}=\conv\left\{0, e_1, e_2, \frac{1}{2}e_3, \frac{1}{2}e_4, \frac{1}{2}e_5, \frac{1}{3}e_6\right\}. 
\end{equation}
The Ehrhart quasi-polynomial $L_{P_{E_6}}(t)$ has 
the minimal period $\rho=6$ 
and satisfies the $\GCD$-property. 
\item[(2)] 
Consider the $7$-dimensional rational simplex 
$P_{E_7}\subset\R^7$ defined by 
\begin{equation}
P_{E_7}=\conv\left\{0, e_1, \frac{1}{2}e_2, \frac{1}{2}e_3, \frac{1}{2}e_4, \frac{1}{3}e_5, \frac{1}{3}e_6, \frac{1}{4}e_7\right\}. 
\end{equation}
The Ehrhart quasi-polynomial $L_{P_{E_7}}(t)$ has 
the minimal period $\rho=12$ 
and satisfies the $\GCD$-property. 
\item[(3)] 
Consider the $8$-dimensional rational simplex 
$P_{E_8}\subset\R^8$ defined by 
\begin{equation}
P_{E_8}=\conv\left\{0, \frac{1}{2}e_1, \frac{1}{2}e_2, \frac{1}{3}e_3, \frac{1}{3}e_4, \frac{1}{4}e_5, \frac{1}{4}e_6, \frac{1}{5}e_7, \frac{1}{6}e_8\right\}. 
\end{equation}
The Ehrhart quasi-polynomial $L_{P_{E_8}}(t)$ has 
the minimal period $\rho=60$ 
and satisfies the $\GCD$-property. 
\item[(4)]
Consider the $4$-dimensional rational simplex 
$P_{F_4}\subset\R^4$ defined by 
\begin{equation}
P_{F_4}=\conv\left\{0, \frac{1}{2}e_1, \frac{1}{2}e_2, \frac{1}{3}e_3, \frac{1}{4}e_4\right\}. 
\end{equation}
The Ehrhart quasi-polynomial $L_{P_{F_4}}(t)$ has 
the minimal period $\rho=12$ 
and satisfies the $\GCD$-property. 
\item[(5)]
Consider the $2$-dimensional rational simplex 
$P_{G_2}\subset\R^2$ defined by 
\begin{equation}
P_{G_2}=\conv\left\{0, \frac{1}{2}e_1, \frac{1}{3}e_2\right\}. 
\end{equation}
The Ehrhart quasi-polynomial $L_{P_{G_2}}(t)$ has 
the minimal period $\rho=6$ 
and satisfies the $\GCD$-property. 
\end{itemize}
\end{example}
As we observed in Example \ref{ex:01}, 
not all rational polytopes possess the $\GCD$-property. 
It appears to be a relatively rare phenomenon. 
This naturally leads to the following question. 

\begin{problem}
Which rational polytopes exhibit 
Ehrhart quasi-polynomials with the $\GCD$-property 
(or symmetric quasi-polynomial)? 
\end{problem}

The notion of the $\GCD$-property for quasi-polynomials 
was first formulated in the theory of hyperplane arrangements. 
Let us recall briefly. 

Consider non-zero group homomorphisms 
$\alpha_i:\Z^\ell\longrightarrow\Z$ ($i=1, \dots, n$). 
These homomorphism lead to the definition of a hyperplane 
arrangement as the set of hyperplanes defined by 
$\Ker(\alpha_i\otimes\R: \R^\ell\longrightarrow\R)$, $i=1, \dots, n$. 

On the other hand, for any positive integer $q>0$, 
$\alpha_i$ induces a homomorphism 
$\alpha_i\otimes(\Z/q\Z):(\Z/q\Z)^\ell\longrightarrow\Z/q\Z$. 
Kamiya, Takemura and Terao \cite{ktt-cent} established that 
the number of points in the ``mod $q$ complement'' 
\begin{equation}
\left|
(\Z/q\Z)^\ell\smallsetminus\bigcup_{i=1}^n \Ker(\alpha_i\otimes(\Z/q\Z))
\right|
\end{equation}
is a quasi-polynomial in $q$ with the $\GCD$-property, 
which is called 
the \emph{characteristic quasi-polynomial} of the arrangement. 

One of the most important properties is that the 
first constituent of the characteristic quasi-polynomial is equal to the 
characteristic polynomial of a hyperplane arrangement \cite{ot}. 
Moreover, the characteristic quasi-polynomial plays a significant 
role in the context of toric arrangements \cite{ers, lty, ty}. 

It is worth noting that Suter's computations 
(Example \ref{alcove}) is closely 
related to characteristic quasi-polynomials. 
In fact, the Ehrhart quasi-polynomials in Example \ref{alcove}, 
up to scalar multiples, 
are identical to characteristic quasi-polynomials of 
corresponding reflection arrangements \cite{ath, yos-wor}. 

Since the relationship between zonotopes and hyperplane arrangements 
is an actively studied research topic (e.g., see \cite[Chap 7]{zie}), 
it would be interesting to explore the following question. 

\begin{problem}
Are there direct connections between 
characteristic quasi-polynomials of hyperplane arrangements 
and Ehrhart quasi-polynomials of almost integral zonotopes? 
(Note that both are quasi-polynomials with the $\GCD$-property.) 
\end{problem}

\medskip

\noindent
{\bf Acknowledgements.} 
Christopher de Vries was supported by the German Academic Exchange Service. 
Masahiko Yoshinaga 
was partially supported by JSPS KAKENHI 
Grant Numbers JP19K21826, JP18H01115, JP23H00081. 
The authors are grateful to 
Office for International Academic Support, 
Faculty of Science, Hokkaido University for their support 
during the first author's visit to the second author. 
The authors thank 
Akihiro Higashitani, Shigetaro Tamura, Tan Nhat Tran, 
Matthias Beck, Christos Athanasiadis, Satoshi Murai, Mitsuhiro Miyazaki 
and Richard Stanley for fruitful discussions and comments
during the preparation of this paper. 
In particular, Problem \ref{Stanley} was inspired by Stanley's comment. 
\medskip

\end{document}